\documentclass[preprint,12pt,times]{elsarticle}

\usepackage{amssymb}
\usepackage{amsmath}
\usepackage{amsthm}
\usepackage{geometry}
\usepackage{setspace}
\usepackage{color}
\usepackage[
  colorlinks,
  linkcolor=blue,
  anchorcolor=blue,
  citecolor=blue,
  urlcolor=blue
]{hyperref}

\newtheorem{theorem}{Theorem}[section]
\newtheorem{corollary}[theorem]{Corollary}
\newtheorem{lemma}[theorem]{Lemma}

\theoremstyle{definition}

\theoremstyle{remark}

\numberwithin{equation}{section}
\allowdisplaybreaks
\newcommand{\Disk}{\mathbb D}
\newcommand{\Torus}{\mathbb T}

\newcommand{\AL}{AL}
\newcommand{\AT}{AT}
\newcommand{\Ces}{\mathcal C}
\newcommand{\Dclass}{\mathcal D}
\newcommand{\Dhat}{\widehat{\mathcal D}}
\newcommand{\Dcheck}{\check{\mathcal D}}
\newcommand{\what}[1]{\widehat{#1}}
\newcommand{\Ctail}{\mathfrak C_{\beta,p}}

\newcommand{\Tcut}{T_{\mu,\beta;n_0}^{K}}

\newcommand{\stmtrev}[1]{#1}
\newcommand{\roundrev}[1]{#1}

\newcommand{\red}[1]{#1}
\begin{document}

\begin{frontmatter}

\title{Ces\`aro-type operators on average radial integrability spaces induced by doubling weights}

\author[aff1]{Jiaxin Pan}
\ead{jxpan0209@163.com}

\author[aff1]{Cezhong Tong}
\ead{ctong@hebut.edu.cn}
\ead{cezhongtong@hotmail.com}
\author[aff1]{Zixing Yuan\texorpdfstring{\corref{cor1}}{}}
\ead{yuan980127@163.com}
\cortext[cor1]{Corresponding author}
\affiliation[aff1]{Institute of Mathematics, Hebei University of Technology,
		Tianjin 300401, China}

\begin{abstract}
In this paper, we study the Ces\`aro-type operator
$$
\Ces_{\mu,\beta}f(z)=\int_{[0,1)}\frac{f(tz)}{(1-tz)^\beta}\,d\mu(t),
$$
where \(\mu\) is a positive Borel measure on \([0,1)\) and \(\beta>0\).
For \(0<p,q<\infty\) and radial doubling weights
\(\omega_1,\omega_2\), we prove that
\(\Ces_{\mu,\beta}:\AL_p^q(\omega_1)\to\AL_p^q(\omega_2)\) is bounded if
and only if
$$
\mu([0,1))+
\sup_{1/2\le r<1}\frac{\mu([r,1))}{(1-r)^\beta}
\left(\frac{\what\omega_2(r)}{\what\omega_1(r)}\right)^{1/p}<\infty,
$$
and its norm is comparable to this quantity. For standard weights, our result extends
\cite{BlascoMas2026,GalanopoulosSiskakisZhao2025} to the full range
$0<p=q<\infty$. We further characterize compactness and the essential norm
 on average radial integrability spaces
induced by doubling weights.
\end{abstract}

\begin{keyword}
Ces\`aro-type operator \sep average radial integrability space
\sep doubling weight \sep boundedness \sep compactness \sep essential norm

\MSC[2020] 47G10 \sep 30H20 \sep 46E30
\end{keyword}

\end{frontmatter}

\thispagestyle{plain}
\section{Introduction}\label{sec:introduction}

 Let $H(\mathbb D)$ denote the space of analytic functions in the unit disc $\mathbb{D}=\{z \in \mathbb C:|z|<1\}$. For \(0<p,q<\infty\) and a radial weight
\(\omega\), let \(L_p^q(\omega)\) be the space of complex-valued measurable functions $f$ on $\mathbb{D}$ such that
$$
 \|f\|_{L_p^q(\omega)}
 =\left(\int_0^{2\pi}
 \left[\int_0^1|f(re^{i\theta})|^p\omega(r)r\,dr\right]^{q/p}
 \frac{d\theta}{2\pi}\right)^{1/q}<\infty.
$$
The weighted average radial integrability space is
$$
 \AL_p^q(\omega)=L_p^q(\omega)\cap H(\mathbb D).
$$
Taking $\omega\equiv1$ yields the unweighted case.
The unweighted average radial integrability spaces were introduced in
\cite{AguilarHernandezContrerasRodriguezPiazza2022}, and further operator theory of
these spaces can be found in
\cite{AguilarHernandezContrerasRodriguezPiazza2021,AguilarHernandezGalanopoulos2023}.
The case of general radial weights was studied in
\cite{AguilarHernandezEtAl2026}, where several basic properties, including
point evaluation estimates, Littlewood-Paley formulas, maximal function
estimates, and boundedness of the Bergman projection, etc., were established. For the standard weight
$\omega_\alpha(r)=(\alpha+1)(1-r^2)^\alpha, \alpha>-1$, and
$0<p=q<\infty$, Fubini's theorem gives
$
 \AL_p^p(\omega_\alpha)=A_\alpha^p
$
with equivalent quasinorms. Thus, the classical weighted Bergman spaces
\(A_\alpha^p\) are special cases of \(\AL_p^q(\omega)\).

For \(\beta>0\) and let \(\mu\) be a positive Borel measure on \([0,1)\). We consider the Ces\`aro-type operator
\begin{equation}\label{eq:intro-operator}
	\Ces_{\mu,\beta}f(z)
	=\int_{[0,1)}\frac{f(tz)}{(1-tz)^\beta}\,d\mu(t),
	\qquad z\in\Disk.
\end{equation}
When \(d\mu(t)=dt\) and \(\beta=1\), \eqref{eq:intro-operator} is the
classical Ces\`aro operator. The operator properties of the classical Ces\`aro operators were first studied by Brown, Halmos and Shields \cite{BHS} in 1965.  Since then, many authors have investigated this operator, see for example \cite{Andersen,ShiRen1998,Siskakis1987,Siskakis1990,Siskakis1996}. In this paper, we focus on the
measure-induced Ces\`aro-type operator \eqref{eq:intro-operator}.

Ces\`aro-type operators induced by measures have been studied intensively in the past few decades. Galanopoulos, Girela, and Merch\'an
\cite{GalanopoulosGirelaMerchan2022} investigated Ces\`aro-type operators on several classical spaces of analytic functions, including weighted Bergman spaces,
Hardy spaces, \(BMOA\), and the Bloch space. A correction to the
Bergman space sufficiency argument appeared later in
\cite{GalanopoulosGirelaMerchanCorrection2026}. Bao, Sun, and Wulan
\cite{BaoSunWulan2022} studied  
Ces\`aro-type operator on the space of bounded analytic functions. 
Recently, Galanopoulos,
Siskakis, and Zhao \cite{GalanopoulosSiskakisZhao2025} studied the boundedness of  Ces\`aro-type operators from $A_\alpha^p$ to $A_\alpha^q$ for $1\le p\le q<\infty$. 
Blasco and Mas \cite{BlascoMas2026} considered Ces\`aro-type
operators on the classical mixed norm spaces \(H(p,q,\gamma)\). Recall that for $0<p,q,\gamma<\infty,$ \(H(p,q,\gamma)\) consists of  analytic functions $f$ on $\mathbb D$ such that
\[
 \|f\|_{H(p,q,\gamma)}
 =\left(\int_0^1M_p(f,r)^q(1-r)^{\gamma q-1}\,dr\right)^{1/q},
\]
where \(M_p(f,r)=\left(\int_0^{2\pi}|f(re^{i\theta})|^p\frac{d\theta}{2\pi}\right)^{1/p}\).
We reminder the readers that \(H(p,q,\gamma)\) and \(\AL_p^q(\omega_\alpha)\) are different.
 For \(0<p=q<\infty\), however, Fubini's
theorem yields
$
\AL_p^p(\omega_\alpha)=A_\alpha^p
=H\!\left(p,p,(\alpha+1)/p\right)
$
with equivalent quasinorms. 
Related measure-induced Ces\`aro-type operators on Hilbert spaces, derivative-type
Hilbert spaces
and Dirichlet-type spaces were studied in
\cite{GalanopoulosGirelaMasMerchan2023,GalanopoulosGirelaMerchan2023,JinTang2022,LinXie2025,XieLiuLinCorrection2026,XieLiuLin2026}.

Motivated by the results above, we completely characterize the boundedness of \(\Ces_{\mu,\beta}\) between weighted average radial integrability spaces with doubling weights for all \(0<p,q<\infty\). In the
standard weight case with \(p=q\), Theorem~6.10 of \cite{BlascoMas2026} characterizes the
boundedness of \(\Ces_{\mu,\beta}\) from \(H\!\left(p,p,(\alpha_1+1)/p\right)\) to \( H\!\left(p,p,(\alpha_2+1)/p\right)\) for \(p\ge1\) under
\(s=\beta+\frac{\alpha_1-\alpha_2}{p}>0\). For \(0<p<1\), Lemmas~6.8 and~6.9 of that paper give the boundedness
characterization when \(s>0\) and
\(\alpha_2>\alpha_1\), or equivalently, when \(s>0\) and \(\beta>s\).
This partially extends the results of Galanopoulos et al.~\cite{GalanopoulosSiskakisZhao2025}
to the case \(0<p<1\).  Our results removes the additional condition  \(\alpha_2>\alpha_1\) and give a complete characterization for $0<p<1$, thereby filling this gap. Furthermore, the case  $s\leq 0$ is also studied in our theorem.
When \(p\ne q\), the boundedness results in
\cite{BlascoMas2026,GalanopoulosSiskakisZhao2025} do not directly imply our
boundedness theorem even for standard weights because of the different order
of integration. Beyond boundedness, we also characterize
compactness and the essential norm in the two-weight setting.

We now state our main results. For a radial weight \(\omega\), write
$
 \what\omega(r)=\int_r^1\omega(s)\,ds.
$
We shall work with the class of two-sided doubling weights
\(\Dclass=\Dhat\cap\Dcheck\), defined in Section~\ref{sec:preliminaries}.
Our first main result gives the boundedness characterization in the
two-weight setting.

\begin{theorem}\label{thm:main-two-weight}
For \(0<p,q<\infty\), \(\beta>0\),
 \(\omega_1,\omega_2\in\Dclass\), and let \(\mu\) be a positive Borel
 measure on \([0,1)\). Define
 \begin{equation}\label{eq:intro-C}
 \begin{split}
 \Ctail(\mu;\omega_1,\omega_2)
 :=\;&\mu([0,1))+\sup_{1/2\le r<1}
 \frac{\mu([r,1))}{(1-r)^\beta}
 \left(\frac{\what\omega_2(r)}{\what\omega_1(r)}\right)^{1/p}.
 \end{split}
 \end{equation}
 Then
$
 \Ces_{\mu,\beta}:\AL_p^q(\omega_1)\rightarrow
 \AL_p^q(\omega_2)
 $
 is bounded if and only if
 \(\Ctail(\mu;\omega_1,\omega_2)<\infty\). Moreover,
 \begin{equation*}
 \bigl\|\Ces_{\mu,\beta}\bigr\|_{
 \AL_p^q(\omega_1)\to\AL_p^q(\omega_2)}
 \asymp
 \Ctail(\mu;\omega_1,\omega_2).
 \end{equation*}
\end{theorem}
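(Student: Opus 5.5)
We prove the two inequalities \(\|\Ces_{\mu,\beta}\|\lesssim\Ctail(\mu;\omega_1,\omega_2)\) and \(\Ctail(\mu;\omega_1,\omega_2)\lesssim\|\Ces_{\mu,\beta}\|\) separately. Throughout we use the standard facts for \(\omega\in\Dclass\) from Section~\ref{sec:preliminaries}: the doubling property \(\what\omega(r)\asymp\what\omega(\frac{1+r}{2})\), the power bounds \(\what\omega(t)/\what\omega(r)\lesssim((1-t)/(1-r))^{a}\) and \(\gtrsim((1-t)/(1-r))^{b}\) for \(0\le r\le t<1\) with some \(0<a\le b\), and the pointwise estimate for \(f\in\AL_p^q(\omega)\) (from \cite{AguilarHernandezEtAl2026}), \(|f(z)|\lesssim \|f\|\,(1-|z|)^{-1/q}\what\omega(|z|)^{-1/p}\).

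\textbf{Necessity.} Take the test functions \(f_a(z)=(1-az)^{-\gamma}\) with \(a\in[1/2,1)\) and \(\gamma\) large. A standard computation (integrate radially near the direction \(\theta\approx0\), then in \(\theta\)) gives \(\|f_a\|_{\AL_p^q(\omega_1)}\asymp(1-a)^{-\gamma+1/q}\what\omega_1(a)^{1/p}\). Since \(\mu\ge0\), for \(z=x\in[0,1)\) the integrand in \(\Ces_{\mu,\beta}f_a(x)\) is positive. Restricting to \(t\in[a,1)\) and \(x\) near \(a\) gives a lower bound of size \(\mu([a,1))(1-a)^{-\gamma-\beta}\) on a Stolz-type region \(\{z=\rho e^{i\theta}: \rho\in[a,(1+a)/2],\ |\theta|\lesssim 1-a\}\), where real parts dominate. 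Taking the \(\AL_p^q(\omega_2)\) norm over this region produces
\[
\|\Ces_{\mu,\beta}f_a\|\gtrsim \mu([a,1))(1-a)^{-\gamma-\beta}(1-a)^{1/q}\what\omega_2(a)^{1/p},
\]
and dividing by \(\|f_a\|\) yields the supremum term. Testing with \(f\equiv1\) gives \(\Ces_{\mu,\beta}1(z)=\int(1-tz)^{-\beta}d\mu\), whose value near \(z=0\) bounds \(\mu([0,1))\) via the subharmonic mean over a small disc. Hence \(\Ctail\lesssim\|\Ces_{\mu,\beta}\|\).

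\textbf{Sufficiency.} Split \(\mu=\mu|_{[0,1/2)}+\mu|_{[1/2,1)}\). The first piece is harmless: for \(t<1/2\), \(|1-tz|^{-\beta}\lesssim1\) and \(f(tz)\) is controlled by \(\|f\|\), so this part maps into the bounded functions, hence into \(\AL_p^q(\omega_2)\), with norm \(\lesssim\mu([0,1))\). For the second piece, fix a direction \(\theta\) and \(z=re^{i\theta}\). Decompose \([1/2,1)\) dyadically into \(I_k=[1-2^{-k},1-2^{-k-1})\). Use \(|1-tz|\gtrsim (1-tr)\asymp \max(1-t,1-r)\) and \(|f(tz)|\le N f(t e^{i\theta})\), where \(N f\) is a radial (nontangential) maximal function. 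This reduces the problem to a one-dimensional Hardy-type inequality along each ray:
\[
\Bigl(\int_0^1\Bigl|\int_{[1/2,1)}\frac{|f(te^{i\theta})|}{(1-tr)^\beta}d\mu(t)\Bigr|^p\omega_2(r)dr\Bigr)^{1/p}\lesssim \Ctail\Bigl(\int_0^1 \widetilde{N}f(se^{i\theta})^p\omega_1(s)ds\Bigr)^{1/p},
\]
where \(\widetilde N\) is a suitable local maximal function along the ray. The \(q\)-integration in \(\theta\) then follows, provided the maximal function is bounded on \(\AL_p^q(\omega_1)\); this is the maximal estimate from \cite{AguilarHernandezEtAl2026}.

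\textbf{The one-dimensional inequality.} The key is that on a dyadic block, \(\sup_{t\in I_k}|f(te^{i\theta})|^p\) is bounded by an average of \(|f|^p\) over a slightly enlarged region, weighted by \(\omega_1\) and divided by \(\what\omega_1(1-2^{-k})\), up to \(\theta\)-local averaging. For \(p\le1\), use \((\sum a_k)^p\le\sum a_k^p\). Then each \(k\) contributes \(\mu(I_k)^p\sup_{I_k}|f|^p\sum_j (\max(2^{-j},2^{-k}))^{-\beta p}\what\omega_2(1-2^{-j})\). The power bounds on \(\what\omega_2\) make this sum \(\asymp 2^{k\beta p}\what\omega_2(1-2^{-k})\) plus a tail \(\sum_{j<k}2^{j\beta p}\what\omega_2(1-2^{-j})\). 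The tail is a geometric series only when \(\beta p>b\). Otherwise we must use \(\mu([r,1))\) rather than \(\mu(I_k)\), summing by parts with the tail hypothesis to get \(\mu([1-2^{-j},1))^p\,2^{j\beta p}\what\omega_2\lesssim \Ctail^p\,\what\omega_1\). For \(p>1\), use Hölder with a geometric weight \(2^{-\epsilon|j-k|}\), or a Schur test, to reach the same bound.

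\textbf{Main obstacle.} The hard case is small \(\beta p\) (the case \(s\le0\) mentioned in the introduction). There the kernel is not integrable enough for a direct dyadic diagonal bound, and one needs the Abel summation with \(\mu([r,1))\) together with the monotonicity of \(t\mapsto\sup_{|w|\le t}|f|\) to close the estimate. A second delicate point is passing the maximal function through the mixed norm in the \(p\neq q\) case.
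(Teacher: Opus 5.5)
Your necessity half is sound. You bound $\|\Ces_{\mu,\beta}f_a\|_{\AL_p^q(\omega_2)}$ from below on a Stolz-type region, whereas the paper simply applies Lemma~\ref{lem:cited-point-evaluation} at $z=a$. Your route works if you take $\rho\in[a,1-(1-a)/\kappa]$ rather than $[a,(1+a)/2]$, because $\omega_2\in\Dcheck$ only guarantees that the longer interval carries a fixed fraction of $\what\omega_2(a)$.

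The sufficiency half, however, is built on an intermediate inequality that is false. The bound $|f(tz)|\le Nf(te^{i\theta})$ measures $f$ at the scale of $t$ even when $t>r=|z|$. The one-dimensional inequality you then set out to prove fails whenever $\beta p$ lies below the decay exponent of $\what\omega_2$. Take $\omega_1=\omega_2=\omega_\alpha$ with $\beta p<\alpha+1$, $\mu=(1-t_0)^\beta\delta_{t_0}$ (so $\Ctail(\mu;\omega_1,\omega_2)\le2$ for every $t_0$), and $f(z)=(1-t_0z)^{-\gamma}$ with $\gamma$ large. For $|\theta|\le1-t_0$ one has $|f(t_0e^{i\theta})|\ge3^{-\gamma}(1-t_0)^{-\gamma}$. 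Hence your majorant $(1-t_0)^\beta|f(t_0e^{i\theta})|(1-t_0r)^{-\beta}$ is $\gtrsim(1-t_0)^{\beta-\gamma}$ for every $r$, and its $L_p^q(\omega_2)$ quasinorm is $\gtrsim(1-t_0)^{\beta-\gamma+1/q}$. On the other hand, $\|f\|_{\AL_p^q(\omega_\alpha)}\lesssim(1-t_0)^{-\gamma+1/q+(\alpha+1)/p}$ by Lemma~\ref{lem:cited-high-kernel}. The ratio $(1-t_0)^{\beta-(\alpha+1)/p}$ blows up as $t_0\to1^-$. The same computation with $d\mu=dt$, $\beta=1$ shows your majorant is unbounded for the classical Ces\`aro operator on $A^p_\alpha$ when $p<\alpha+1$, although that operator is bounded. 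So no Abel summation or Schur test can close your estimate. The ``main obstacle'' you describe is created by the lossy pointwise step, and it is unrelated to the sign of $s$ (here $s=\beta>0$).

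The missing idea is to use $|tz|\le|z|$ together with the monotonicity of the radial maximal function along each ray: $|f(rte^{i\theta})|\le R(f)(\min\{r,t\}e^{i\theta})$. Write $a_j=R(f)(r_je^{i\theta})$ and $x_k=w_{1,k+1}^{1/p}a_{k+1}$.

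For $r\in I_n$ and $t\ge r_{n+1}$, the kernel is at most $K^{\beta(n+1)}$ and $|f(rte^{i\theta})|\le a_{n+1}$. Thus the whole tail mass $M_{n+1}=\mu([r_{n+1},1))$ is paired with a single value at the scale of $r$, and $w_{2,n}^{1/p}K^{\beta n}M_{n+1}a_{n+1}\lesssim E(r_n)x_n$ directly.

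For $t\in I_k$ with $k\le n$, one uses the kernel bound $K^{\beta(k+1)}$, the value $a_{k+1}$, and $m_k\le M_k$. This gives $w_{2,n}^{1/p}(\Tcut a)_n\lesssim\sup_{j\ge n_0}E(r_j)\sum_{k=n_0}^{n}(\what\omega_2(r_n)/\what\omega_2(r_k))^{1/p}x_k$. Reverse doubling of $\omega_2$ makes this a geometric convolution, bounded on $\ell^p$ for every $0<p<\infty$ and every $\beta>0$, with no case distinction.

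Monotonicity of $R(f)$ also gives $w_{1,k+1}a_{k+1}^p\le\int_{I_{k+1}}R(f)(re^{i\theta})^p\omega_1(r)r\,dr$. So no angular averaging over Whitney boxes is needed, and Lemma~\ref{lem:cited-maximal} handles the mixed norm for all $p,q$ at once.

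Your closing remark about the monotonicity of $t\mapsto\sup_{|w|\le t}|f|$ points in this direction. But it has to be the ray-wise $R(f)$, since a supremum over the whole disc $\{|w|\le t\}$ destroys the angular integration. And it has to enter at the pointwise step, before any dyadic summation.
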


For a bounded operator \(T:X\to Y\), its essential norm is
\(\|T\|_e=\inf\{\|T-K\|:K:X\to Y\text{ is compact}\}\). Our second
result \red{concerns} the essential norm and compactness.

\begin{theorem}\label{thm:intro-essential-norm}
For \(0<p,q<\infty\), \(\beta>0\),
 \(\omega_1,\omega_2\in\Dclass\), \stmtrev{and let \(\mu\) be a positive Borel
 measure on \([0,1)\).} Suppose that
 \(\Ces_{\mu,\beta}:\AL_p^q(\omega_1)\to\AL_p^q(\omega_2)\) is bounded.
 Then
 \begin{equation}\label{eq:intro-essential-norm}
 \bigl\|\Ces_{\mu,\beta}\bigr\|_e
 \asymp
 \limsup_{r\to1^-}
 \frac{\mu([r,1))}{(1-r)^\beta}
 \left(\frac{\what\omega_2(r)}{\what\omega_1(r)}\right)^{1/p}.
 \end{equation}
 Consequently, \(\Ces_{\mu,\beta}\) is compact if and only if
 \begin{equation}\label{eq:intro-compactness-condition}
 \lim_{r\to1^-}
 \frac{\mu([r,1))}{(1-r)^\beta}
 \left(\frac{\what\omega_2(r)}{\what\omega_1(r)}\right)^{1/p}=0.
 \end{equation}
\end{theorem}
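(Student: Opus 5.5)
Theorem~\ref{thm:intro-essential-norm} will follow from Theorem~\ref{thm:main-two-weight} and its proof by splitting $\mu$ near the boundary and using test functions. Set
\[
L:=\limsup_{r\to1^-}\frac{\mu([r,1))}{(1-r)^\beta}\Bigl(\frac{\what\omega_2(r)}{\what\omega_1(r)}\Bigr)^{1/p}.
\]

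\emph{Upper bound.} Fix $r_0\in[1/2,1)$ and write $\mu=\mu_{r_0}+\nu_{r_0}$, where $\mu_{r_0}=\mu|_{[0,r_0)}$ and $\nu_{r_0}=\mu|_{[r_0,1)}$. Then $\Ces_{\mu,\beta}=\Ces_{\mu_{r_0},\beta}+\Ces_{\nu_{r_0},\beta}$.

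First, $\Ces_{\mu_{r_0},\beta}$ is compact. For $t\le r_0$, the function $f(tz)(1-tz)^{-\beta}$ depends only on the values of $f$ on the disc $|w|\le r_0$. On that disc, the point evaluation estimates from the preliminaries (cf.~\cite{AguilarHernandezEtAl2026}) bound $\sup_{|w|\le r_0}|f(w)|$ by $C(r_0)\|f\|_{\AL_p^q(\omega_1)}$. Consequently $\Ces_{\mu_{r_0},\beta}f$ is analytic on the larger disc $|z|<1/r_0$, and its derivatives are uniformly bounded on $\overline{\Disk}$. Montel's theorem then shows that bounded sets are mapped into relatively compact subsets of $H^\infty\subset\AL_p^q(\omega_2)$, using that $\omega_2$ is integrable.

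Second, applying the norm bound in Theorem~\ref{thm:main-two-weight} to $\nu_{r_0}$ gives
\[
\|\Ces_{\mu,\beta}\|_e\le\|\Ces_{\nu_{r_0},\beta}\|\lesssim \Ctail(\nu_{r_0};\omega_1,\omega_2).
\]
The term $\nu_{r_0}([0,1))=\mu([r_0,1))$ is dominated by the tail quantity, since $(1-r_0)^{-\beta}(\what\omega_2/\what\omega_1)^{1/p}(r_0)$ is bounded below. Moreover, for $r<r_0$ we have $\nu_{r_0}([r,1))=\mu([r_0,1))$, and the doubling properties ($\Dhat$ for $\omega_2$ and $\Dcheck$ for $\omega_1$) let me compare the ratio at $r$ with the ratio at $r_0$. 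I expect this comparison to be the main technical point. The worry is that $(1-r)^{-\beta}(\what\omega_2/\what\omega_1)^{1/p}$ need not be monotone. To handle it, I would instead bound $\|\Ces_{\nu_{r_0},\beta}\|$ by directly rerunning the sufficiency proof of Theorem~\ref{thm:main-two-weight}, which presumably uses only $\sup_{r\ge r_0}$ for the piece near $1$ plus $\mu([r_0,1))$ times a quantity tending to $0$. This yields $\|\Ces_{\mu,\beta}\|_e\lesssim\sup_{r\ge r_0}(\cdots)+o(1)$, and letting $r_0\to1$ gives $\|\Ces_{\mu,\beta}\|_e\lesssim L$.

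\emph{Lower bound.} I will reuse the test functions from the necessity part of Theorem~\ref{thm:main-two-weight}, normalized as
\[
f_a(z)=\frac{(1-a)^{\gamma}}{(1-az)^{\gamma+1/p}}\,\what\omega_1(a)^{-1/p}\qquad (a\to1^-),
\]
with $\gamma$ large. These satisfy $\|f_a\|_{\AL_p^q(\omega_1)}\asymp1$ and $f_a\to0$ uniformly on compact subsets. For compact $K$, the standard argument (weak-type convergence, since the space need not be Banach when $p$ or $q$ is less than $1$) gives $\|Kf_{a_n}\|\to0$ along a subsequence. Here I will argue with compact sets in the quasi-Banach space: a subsequence of $Kf_{a_n}$ converges in norm, and its limit is $0$ because pointwise evaluation is continuous. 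Hence
\[
\|\Ces_{\mu,\beta}-K\|\gtrsim\limsup_{a\to1}\|\Ces_{\mu,\beta}f_a\|_{\AL_p^q(\omega_2)}.
\]
The necessity estimate of Theorem~\ref{thm:main-two-weight} bounds $\|\Ces_{\mu,\beta}f_a\|$ below by the tail ratio at $r=a$, since it only uses the mass of $\mu$ on $[a,1)$. This gives $\|\Ces_{\mu,\beta}\|_e\gtrsim L$.

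Compactness is equivalent to $\|\Ces_{\mu,\beta}\|_e=0$, which holds exactly when $L=0$, i.e., condition \eqref{eq:intro-compactness-condition}.
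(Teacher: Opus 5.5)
\textbf{The lower bound has a genuine gap.} It sits at your claim that every compact $\mathcal K$ satisfies $\|\mathcal K f_{a_n}\|\to0$ along a subsequence. If $\mathcal K f_{a_{n_k}}\to g$ in norm, continuity of point evaluations only gives $g(z)=\lim_k(\mathcal K f_{a_{n_k}})(z)$. Nothing forces this limit to vanish unless $f_{a_n}\to0$ weakly. Boundedness plus local uniform convergence yields that when the space is reflexive (e.g.\ $1<p,q<\infty$), but not in general.

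The claim already fails for $p=q=1$, $\omega_1=\omega_2\equiv1$ (both spaces are $A^1$). There your test functions are $f_a(z)=(1-a)^{\gamma-1}(1-az)^{-\gamma-1}$. The functional $\Lambda(f)=\int_0^1 f(t)(1-t)\,dt$ is bounded on $A^1$ (sub-mean-value property on $D(t,(1-t)/2)$ plus Fubini). Since $1-at\le2(1-a)$ for $t\in[a,1)$,
\[
\Lambda(f_a)\ge(1-a)^{\gamma-1}\int_a^1\frac{1-t}{(2(1-a))^{\gamma+1}}\,dt=2^{-\gamma-2}.
\]
So the rank-one compact operator $\mathcal Kf=\Lambda(f)h$, $h\neq0$, satisfies $\|\mathcal Kf_a\|\ge2^{-\gamma-2}\|h\|$ for all $a$. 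Hence your inequality $\|\Ces_{\mu,\beta}-\mathcal K\|\gtrsim\limsup_{a\to1}\|\Ces_{\mu,\beta}f_a\|$ is unjustified.

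The missing idea, which is the paper's argument, is never to ask for $\mathcal Kg_a\to0$ in norm. Instead, test against the normalized evaluation at the same point, $E_a h=\what\omega_2(a)^{1/p}(1-a)^{1/q}h(a)$. These functionals are uniformly bounded (Lemma~\ref{lem:cited-point-evaluation}) and tend to $0$ on polynomials. By density and equicontinuity they therefore tend to $0$ uniformly on compact subsets of $\AL_p^q(\omega_2)$ (Lemma~\ref{lem:normalized-evaluations}). Since $\{\mathcal Kg_a\}$ is relatively compact, $E_a(\mathcal Kg_a)\to0$. Positivity of $\mu$ gives $E_a(\Ces_{\mu,\beta}g_a)\gtrsim E(a)=\mu([a,1))(1-a)^{-\beta}(\what\omega_2(a)/\what\omega_1(a))^{1/p}$. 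Hence $E(a)\lesssim\|\Ces_{\mu,\beta}-\mathcal K\|+|E_a(\mathcal Kg_a)|$. In the rank-one example, $E_a(\mathcal Kg_a)=\Lambda(g_a)E_a(h)\to0$ even though $\|\mathcal Kg_a\|\not\to0$.

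Your normalization is also off when $p\neq q$. With denominator exponent $\gamma'=\gamma+1/p$, Lemmas~\ref{lem:cited-point-evaluation} and~\ref{lem:cited-high-kernel} give $\|f_a\|_{\AL_p^q(\omega_1)}\asymp(1-a)^{1/q-1/p}$, which is unbounded when $q>p$. The numerator must be $(1-a)^{\gamma'-1/q}$, as in the paper's $g_a$.

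\textbf{The upper bound is also incomplete as written.} Rerunning the sufficiency proof for $\nu_{r_0}$ only reproduces $\Ctail(\nu_{r_0};\omega_1,\omega_2)$. This contains $\mu([r_0,1))\sup_{1/2\le r<r_0}\phi(r)$, where $\phi(r)=(1-r)^{-\beta}(\what\omega_2(r)/\what\omega_1(r))^{1/p}$ and $E(r)=\mu([r,1))\phi(r)$. That term is not $\mu([r_0,1))\cdot o(1)$ in general: for standard weights with $s=\beta+(\alpha_1-\alpha_2)/p>0$ it is $\asymp E(r_0)$. Also, $\phi(r_0)$ need not be bounded below (it tends to $0$ when $s<0$), though $\mu([r_0,1))\to0$ makes that harmless.

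No comparison of $\phi(r)$ with $\phi(r_0)$ is needed; use two parameters. Fix $\rho\in(1/2,1)$ and let $r_0>\rho$.
\begin{itemize}
\item For $r\ge\rho$, $\nu_{r_0}([r,1))=\mu([\max\{r,r_0\},1))\le\mu([r,1))$, so these $r$ contribute at most $\sup_{r\ge\rho}E(r)$.
\item For $1/2\le r<\rho$, the contribution is at most $\mu([r_0,1))\sup_{[1/2,\rho]}\phi$, which tends to $0$ as $r_0\to1$.
\end{itemize}
Hence $\limsup_{r_0\to1}\Ctail(\nu_{r_0};\omega_1,\omega_2)\le\sup_{r\ge\rho}E(r)$, and letting $\rho\to1$ gives $\|\Ces_{\mu,\beta}\|_e\lesssim L$.

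Your compactness argument for $\Ces_{\mu_{r_0},\beta}$ is fine: continuation to $|z|<1/r_0$ plus Montel, with $\mu$ finite by Theorem~\ref{thm:main-two-weight}. It is a minor variant of Lemma~\ref{lem:compact-truncation}.
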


The paper is organized as follows. Section~\ref{sec:preliminaries} collects
the preliminary lemmas. Section~\ref{sec:main-theorem} establishes the boundedness result by controlling the Ces\`aro-type operator  through a shifted sequence operator  acting on weighted sequence spaces.
 Section~\ref{sec:essential-norm}
studies compactness and the essential norm. Three applications of our main results are given in
Section~\ref{sec:examples-scope}.

We write \(A\lesssim B\) if \(A\le CB\) for some constant \(C>0\), and
\(A\asymp B\) if both \(A\lesssim B\) and \(B\lesssim A\) hold.
\section{Preliminaries}\label{sec:preliminaries}
We begin by collecting the basic properties of the radial doubling weights and several
lemmas needed in the proofs.

\subsection{Radial doubling weights}\label{subsec:doubling-weights}

We first recall the radial doubling weights used throughout the
paper. These doubling conditions arise naturally in the theory of weighted
Bergman spaces and related operators. These classes were introduced and
systematically studied by Pel\'aez and R\"atty\"a, see for example
\cite{PelaezSmall2016,PelaezRattya2014,PelaezRattya2015,
PelaezRattya2016,PelaezRattya2021}.
A radial weight is a nonnegative integrable function on \([0,1)\),
extended to \(\Disk\) by \(\omega(z)=\omega(|z|)\). We assume that its tail
function \(\what\omega(r)=\int_r^1\omega(s)\,ds\) is positive for every
\(r<1\).

A radial weight \(\omega\) belongs to \(\Dhat\) if there exists \(A>1\) such
that
\begin{equation}\label{eq:D-plus}
 \what\omega(r)\le A\what\omega\!\left(\frac{1+r}{2}\right),
 \qquad 0\le r<1.
\end{equation}
\roundrev{We say that
\(\omega\in\Dcheck\) if there exist constants \(B>1\), \(\kappa>1\), and
\(r_0<1\) such that}
\begin{equation}\label{eq:D-minus}
 \what\omega(r)\ge B\what\omega\!\left(1-\frac{1-r}{\kappa}\right),
 \qquad r_0\le r<1.
\end{equation}
\roundrev{Finally, we write \(\Dclass=\Dhat\cap\Dcheck\). Thus, weights in
\(\Dclass\) satisfy both the doubling and reverse doubling conditions and will
be referred to as two-sided doubling weights.} Basic properties of these classes can be found in
\cite{PelaezSmall2016,PelaezRattya2014}.
For \(\alpha>-1\), the standard weight
\(\omega_\alpha(r)=(\alpha+1)(1-r^2)^\alpha \in \Dclass\) with
$
 \what\omega_\alpha(r)\asymp(1-r)^{\alpha+1}.
$

We recall several results from \cite{AguilarHernandezEtAl2026} that will be used in the proof.
For a measurable function \(f:\mathbb D\to\mathbb C\), define its radial
maximal function by
\[
 R(f)(z)=\sup_{0\le\rho\le1}|f(\rho z)|,\qquad z\in\mathbb D.
\]
The maximal function estimate can be found in
\cite[Theorem~2.1]{AguilarHernandezEtAl2026}.

\begin{lemma}\label{lem:cited-maximal}
 For \(0<p,q<\infty\) and let \(\omega\) be a radial weight. Then
 \begin{equation*}
 \|R(f)\|_{L_p^q(\omega)}
 \lesssim_{p,q}\|f\|_{\AL_p^q(\omega)},
 \qquad f\in H(\mathbb D).
 \end{equation*}
\end{lemma}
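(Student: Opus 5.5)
The idea is to dominate the radial maximal function, one circle at a time, by the Hardy--Littlewood maximal function in the angular variable. The mixed norm is then handled by a vector-valued (Fefferman--Stein) maximal inequality.

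\textbf{Step 1: reduction to subharmonicity.} First note that $R(f)(re^{i\theta})=\sup_{0\le s\le r}|f(se^{i\theta})|$. So for each fixed $r$ we only need to control values of $f$ inside the closed disc of radius $r$. Fix an exponent $0<\gamma<\min(p,q)$. The function $|f|^\gamma$ is subharmonic, so for $0\le s<r$ the Poisson representation on the disc of radius $r$ gives
\[
|f(se^{i\theta})|^\gamma\le\int_0^{2\pi}P_{s/r}(\theta-\varphi)\,|f(re^{i\varphi})|^\gamma\,\frac{d\varphi}{2\pi}.
\]
Here $P_\rho$ denotes the Poisson kernel. The kernels $P_\rho$ are radially decreasing approximate identities of uniformly bounded $L^1$ norm. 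Hence, writing $g_r(\varphi)=|f(re^{i\varphi})|^\gamma$ and letting $M$ be the Hardy--Littlewood maximal operator on $\Torus$, we obtain
\[
R(f)(re^{i\theta})^\gamma\lesssim M g_r(\theta),\qquad 0<r<1 .
\]
The case $s=r$ is trivial, since $g_r\le Mg_r$ a.e.

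\textbf{Step 2: rewriting the norm.} Set $a=q/\gamma>1$ and $b=p/\gamma>1$. Then
\[
\|R(f)\|_{L_p^q(\omega)}^\gamma\lesssim\left\|\left(\int_0^1 (Mg_r(\cdot))^{b}\,\omega(r)r\,dr\right)^{1/b}\right\|_{L^{a}(\Torus)}.
\]
Similarly, $\|f\|_{\AL_p^q(\omega)}^\gamma$ equals the same expression with $Mg_r$ replaced by $g_r$. The lemma is therefore reduced to the boundedness of $M$, acting on the first variable, on the mixed-norm space $L^{a}\bigl(\Torus;L^{b}(\omega(r)r\,dr)\bigr)$ with $1<a,b<\infty$.

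\textbf{Step 3: the vector-valued maximal inequality (main obstacle).} This is the Fefferman--Stein inequality for $\ell^b$-valued functions, here with a continuous index $r$. Equivalently, it is the Benedek--Panzone result that $M$ is bounded on $L^a(L^b)$. The bound depends only on $a$ and $b$, not on the measure $\omega(r)r\,dr$; this is why the lemma holds for an arbitrary radial weight.

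To pass from the discrete statement to the continuous one, I would first take $f$ analytic on a slightly larger disc, so that $r\mapsto g_r$ is continuous. Then:
\begin{itemize}
\item approximate the $dr$-integral by Riemann-type sums $\sum_j g_{r_j}\,\omega_j$, with $\omega_j$ the $\omega(r)r\,dr$-masses of a partition;
\item apply the discrete Fefferman--Stein inequality to the sequence $(\omega_j^{1/b}g_{r_j})_j$;
\item pass to the limit using lower semicontinuity of $M$ and Fatou's lemma.
\end{itemize}
For general $f\in\AL_p^q(\omega)$, apply this to the dilates $f_\rho(z)=f(\rho z)$. Then let $\rho\to1^-$, using Fatou's lemma on the left-hand side, since $R(f_\rho)\to R(f)$ pointwise. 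On the right-hand side one needs $\|f_\rho\|_{\AL_p^q(\omega)}\lesssim\|f\|_{\AL_p^q(\omega)}$ uniformly in $\rho$. This follows by the same subharmonicity argument, because $|f_\rho(re^{i\theta})|^\gamma\lesssim Mg_r(\theta)$.

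Combining Steps 1--3 gives $\|R(f)\|_{L_p^q(\omega)}\lesssim_{p,q}\|f\|_{\AL_p^q(\omega)}$. The measure-theoretic justification in Step 3 is the only delicate point.
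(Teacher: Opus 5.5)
The paper does not prove this lemma. It imports it from Theorem~2.1 of Aguilar-Hern\'andez et al., so the comparison is between a bare citation and your self-contained argument. Your argument is correct in substance, apart from one circular sentence discussed below.

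Your chain of reasoning works as follows.
\begin{itemize}
\item Subharmonicity of $|f|^\gamma$ and Poisson domination give $R(f)(re^{i\theta})^\gamma\le (Mg_r)(\theta)$ on each circle.
\item The choice $\gamma<\min(p,q)$ rewrites the mixed quasinorm as a norm in $L^{a}(\Torus;L^{b}(\nu))$, with $\nu=\omega(r)r\,dr$ and $a=q/\gamma>1$, $b=p/\gamma>1$.
\item The $L^b(\nu)$-valued Fefferman--Stein inequality then finishes.
\end{itemize}
What this buys over the citation is transparency about the constant. The weight enters only as the finite index measure $\nu$ of a vector-valued maximal inequality, whose constant depends only on $a,b$. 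That explains why the lemma holds for every radial weight with a constant depending only on $p,q$.

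The sentence to repair is in Step~3. You justify $\sup_\rho\|f_\rho\|_{\AL_p^q(\omega)}\lesssim\|f\|_{\AL_p^q(\omega)}$ via $|f_\rho(re^{i\theta})|^\gamma\lesssim Mg_r(\theta)$. To get from $Mg_r$ back to $g_r$ in the mixed norm, you would need the continuous-index maximal inequality for $f$ itself, which is exactly what the dilation was meant to avoid.

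Either of two easy fixes works.
\begin{itemize}
\item Skip $M$ at this point. Use $|f_\rho(re^{i\theta})|^\gamma\le (P_\rho*g_r)(\theta)$. Minkowski's integral inequality in $L^b(\nu)$ gives $\bigl(\int (P_\rho*g_r)^b\,d\nu\bigr)^{1/b}\le P_\rho*G$ with $G=\bigl(\int g_r^b\,d\nu\bigr)^{1/b}$. Since $\|P_\rho*G\|_{L^a(\Torus)}\le\|G\|_{L^a(\Torus)}$, you get $\|f_\rho\|_{\AL_p^q(\omega)}\le\|f\|_{\AL_p^q(\omega)}$.
\item Drop the dilations altogether. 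For any $f\in H(\Disk)$ and $\tau<1$, the map $(r,\varphi)\mapsto g_r(\varphi)$ is uniformly continuous on $[0,\tau]\times\Torus$. Your Riemann-sum argument therefore applies to $\nu|_{[0,\tau]}$, the right side is bounded by the full norm, and monotone convergence as $\tau\to1^-$ concludes.
\end{itemize}

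Finally, the discretization does not need Fatou's lemma. Since $|Mg_r-Mg_{r'}|\le\|g_r-g_{r'}\|_\infty$, the Riemann sums converge uniformly in $\theta$ on both sides.
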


The following point evaluation estimate follows from
\cite[Lemma~3.1]{AguilarHernandezEtAl2026}.
\begin{lemma}\label{lem:cited-point-evaluation}
 For \(0<p,q<\infty\) and \(\omega\in\Dhat\). Then, for every
 \(f\in\AL_p^q(\omega)\) and \(a\in\Disk\),
 \begin{equation*}
 |f(a)|\lesssim
 \frac{\|f\|_{\AL_p^q(\omega)}}
 {\what\omega(|a|)^{1/p}(1-|a|)^{1/q}}.
 \end{equation*}
\end{lemma}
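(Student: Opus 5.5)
The plan is to combine sub-mean-value estimates for \(|f|^q\) on a small disc around \(a\) with the radial maximal function. This lets the angular integrability exponent \(q\) and the radial exponent \(p\) be handled separately, and Lemma~\ref{lem:cited-maximal} finishes the argument. By the maximum principle, \(|f(a)|\le\max_{|z|=1/2}|f(z)|\) when \(|a|\le1/2\). Since \(\what\omega\) and \(1-|a|\) are comparable to constants on \([0,1/2]\) (using \(\what\omega(r)\le\what\omega(0)<\infty\) and \(\what\omega(1/2)>0\)), it suffices to treat \(r=|a|\ge 1/2\). By rotation invariance of the \(L_p^q(\omega)\) quasinorm, we may also take \(a=r\).

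\textbf{Step 1 (subharmonicity in \(q\)).} Let \(\Delta\) be the Euclidean disc centred at \(a\) with radius \(c(1-r)\), where \(c\) is a small absolute constant. Then \(\Delta\) lies in the polar box
\[
\{\rho e^{i\theta}: \rho\in J,\ |\theta|<1-r\}, \qquad J=[r-c(1-r),\,r+c(1-r)].
\]
Since \(|f|^q\) is subharmonic for every \(q>0\),
\[
|f(a)|^q\lesssim \frac{1}{(1-r)^2}\int_{\Delta}|f|^q\,dA
\lesssim \frac{1}{1-r}\int_{|\theta|<1-r} F(\theta)^q\,d\theta,
\qquad F(\theta):=\sup_{\rho\in J}|f(\rho e^{i\theta})|.
\]

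\textbf{Step 2 (radial transfer with exponent \(p\)).} For fixed \(\theta\) and every \(s\in[(1+r)/2,1)\), the segment \(J e^{i\theta}\) lies on the radius through \(se^{i\theta}\) inside \(\{|z|\le s\}\), provided \(c\le 1/2\). Hence \(F(\theta)\le R(f)(se^{i\theta})\) for all such \(s\). Raising to the power \(p\), multiplying by \(\omega(s)s\), and integrating over \(s\in[(1+r)/2,1)\) gives
\[
F(\theta)^p\,\what\omega\!\left(\tfrac{1+r}{2}\right)\lesssim \int_0^1 R(f)(se^{i\theta})^p\,\omega(s)s\,ds .
\]
Since \(\omega\in\Dhat\), we have \(\what\omega((1+r)/2)\ge A^{-1}\what\omega(r)\). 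Therefore
\[
F(\theta)\lesssim \what\omega(r)^{-1/p}\Big(\int_0^1 R(f)(se^{i\theta})^p\omega(s)s\,ds\Big)^{1/p}.
\]

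\textbf{Step 3 (assemble).} Inserting the bound from Step 2 into Step 1 and enlarging the \(\theta\)-integral to \([0,2\pi]\) yields
\[
|f(a)|^q\lesssim \frac{1}{(1-r)\,\what\omega(r)^{q/p}}\,\|R(f)\|_{L_p^q(\omega)}^q .
\]
Lemma~\ref{lem:cited-maximal} then bounds \(\|R(f)\|_{L_p^q(\omega)}\) by \(\|f\|_{\AL_p^q(\omega)}\), and taking \(q\)-th roots gives the claim.

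The main obstacle is that \(\omega\) may be negligible near \(r\) itself, so one cannot simply integrate \(|f|^p\omega\) over a box around \(a\). The point of Step 2 is to move the radial averaging outward to \([(1+r)/2,1)\) through the radial maximal function, where doubling makes the \(\omega\)-mass comparable to \(\what\omega(r)\). Doing the subharmonicity in the exponent \(q\) rather than \(p\) also avoids any need for a Hölder inequality, which would fail when \(q<p\).
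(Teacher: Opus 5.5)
Your proof is correct. Note that the paper itself contains no argument for this statement: it imports it from \cite[Lemma~3.1]{AguilarHernandezEtAl2026}. What you give is a self-contained derivation from Lemma~\ref{lem:cited-maximal}, which the paper also takes as an external input.

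The argument works as follows. Subharmonicity of \(|f|^q\) on a disc of radius \(c(1-r)\) about \(a=r\) reduces \(|f(a)|^q\) to an average of \(F(\theta)^q\) over an arc of length \(\asymp 1-r\). The observation \(F(\theta)\le R(f)(se^{i\theta})\) for all \(s\in[\frac{1+r}{2},1)\) then lets you integrate against the \(\omega\)-mass \(\what\omega(\frac{1+r}{2})\) instead of the possibly negligible mass near \(r\). The details check out. For instance, \(c\le 1/8\) gives both the containment \(|\arg z|<1-r\) for \(r\ge 1/2\) and \(J\subset[0,\frac{1+r}{2}]\). The small-\(|a|\) reduction only needs \(\what\omega(0)\le A\,\what\omega(1/2)\).

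Compared with a bare citation, your route makes three things visible. First, \(\Dhat\) is used exactly once, in \(\what\omega(r)\le A\,\what\omega(\frac{1+r}{2})\). Second, everything else works for arbitrary radial weights. Third, all \(0<p,q<\infty\) are treated uniformly; for \(q>p\), passing to \(\AL_p^p(\omega)\) by H\"older would only give the weaker factor \((1-|a|)^{-1/p}\).

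The price is that the elementary pointwise bound becomes logically downstream of the deeper maximal inequality. That is harmless here, because Lemma~\ref{lem:cited-maximal} enters as an independent input. It would be circular in any development where the maximal theorem is itself derived from pointwise estimates.
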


The following result was proved in 
\cite[Lemma~4.1]{AguilarHernandezEtAl2026}.
\begin{lemma}\label{lem:cited-high-kernel}
For \(0<p,q<\infty\) and \(\omega\in\Dhat\). There exists
 \(\beta_0=\beta_0(\omega)>0\) such that, if
 \(\gamma>\frac1q+\frac{\beta_0}{p}\), then for every \(a\in\mathbb D\), the function
 \[
 f_a(z)=\frac{1}{(1-\overline a z)^\gamma},
 \qquad z\in\Disk,
 \]
 satisfies
 \begin{equation*}
 \|f_a\|_{\AL_p^q(\omega)}
 \lesssim
 \what\omega(|a|)^{1/p}(1-|a|)^{1/q-\gamma}.
 \end{equation*}
\end{lemma}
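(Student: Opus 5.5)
The last stated result is Lemma~\ref{lem:cited-high-kernel}, which the paper cites from \cite[Lemma~4.1]{AguilarHernandezEtAl2026}. I would nonetheless prove it directly, by computing the mixed norm of $f_a$ in two stages. First fix $\theta$ and estimate the inner radial integral $\int_0^1|1-\overline a re^{i\theta}|^{-\gamma p}\omega(r)r\,dr$. Then integrate the $q/p$ power of this quantity in $\theta$.

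By rotation invariance of the norm we may take $a=|a|\in[0,1)$, and we may assume $|a|\ge1/2$, since for $|a|<1/2$ the function $f_a$ is uniformly bounded and $\what\omega(|a|)\asymp1$. For fixed $\theta$ we have $|1-are^{i\theta}|\asymp(1-ar)+|\theta|$. Write $\delta=1-a+|\theta|$. The key step is the bound
\[
\int_0^1\frac{\omega(r)\,dr}{((1-ar)+|\theta|)^{\gamma p}}\lesssim \frac{\what\omega(1-\delta)}{\delta^{\gamma p}}\quad(\delta<1),
\]
provided $\gamma p$ is large relative to the doubling exponent. To prove it, split $[0,1)$ into $[1-\delta,1)$ and dyadic annuli $I_k=[1-2^{k+1}\delta,1-2^k\delta)$. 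On $[1-\delta,1)$ the denominator is $\gtrsim\delta^{\gamma p}$, which gives the main term. On $I_k$ the denominator is $\gtrsim(2^k\delta)^{\gamma p}$ and $\int_{I_k}\omega\le\what\omega(1-2^{k+1}\delta)$. The $\Dhat$ condition implies $\what\omega(r)\le C\left(\frac{1-r}{1-s}\right)^{\beta_0}\what\omega(s)$ for $r\le s$ (the standard consequence with $\beta_0=\log_2A$), so $\what\omega(1-2^{k+1}\delta)\lesssim2^{k\beta_0}\what\omega(1-\delta)$. The resulting series $\sum_k2^{k(\beta_0-\gamma p)}$ converges when $\gamma p>\beta_0$, which follows from $\gamma>1/q+\beta_0/p$.

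Next integrate in $\theta$. We get
\[
\|f_a\|^q\lesssim\int_{-\pi}^{\pi}\frac{\what\omega(1-\delta(\theta))^{q/p}}{\delta(\theta)^{\gamma q}}\,d\theta.
\]
For $|\theta|\le1-a$ we have $\delta\asymp1-a$, so this part contributes $\asymp\what\omega(a)^{q/p}(1-a)^{1-\gamma q}$. For $|\theta|\ge1-a$ we have $\delta\asymp|\theta|$, and the same $\Dhat$ growth estimate gives $\what\omega(1-|\theta|)\lesssim(|\theta|/(1-a))^{\beta_0}\what\omega(a)$. The tail is therefore at most
\[
\what\omega(a)^{q/p}(1-a)^{-\beta_0q/p}\int_{1-a}^{\pi}\theta^{\beta_0q/p-\gamma q}\,d\theta.
\]
The exponent satisfies $\beta_0q/p-\gamma q<-1$, precisely because $\gamma>1/q+\beta_0/p$. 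Hence the integral is $\lesssim(1-a)^{\beta_0q/p-\gamma q+1}$, and the tail contributes $\what\omega(a)^{q/p}(1-a)^{1-\gamma q}$ as well. Taking $q$-th roots gives the claim.

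The main obstacle is to state cleanly the $\Dhat$ growth inequality $\what\omega(r)\lesssim((1-r)/(1-s))^{\beta_0}\what\omega(s)$. This follows by iterating \eqref{eq:D-plus} roughly $\log_2\frac{1-r}{1-s}$ times. A second point is the range $\delta\ge1$, that is, large $|\theta|$ when $a$ is not close to $1$; there both sides are comparable to constants, and these are absorbed trivially.
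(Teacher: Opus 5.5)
Your proof is correct. The paper gives no proof of this statement to compare against; it simply quotes it from \cite[Lemma~4.1]{AguilarHernandezEtAl2026}. Your two-stage computation (dyadic radial decomposition for fixed $\theta$, then an angular integral split at $|\theta|=1-a$) is a self-contained replacement for that citation.

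What your route buys is transparency:
\begin{itemize}
\item It makes an admissible exponent explicit, $\beta_0=\log_2 A$, where $A$ is the constant in \eqref{eq:D-plus}.
\item It shows that only $\omega\in\Dhat$ and the elementary estimate $|1-\rho e^{i\theta}|\asymp(1-\rho)+|\theta|$ are used.
\item It separates the two roles of the hypothesis $\gamma>1/q+\beta_0/p$: $\gamma p>\beta_0$ lets you sum the radial annuli, and $q(\gamma-\beta_0/p)>1$ controls the angular tail.
\end{itemize}
The citation buys brevity and keeps $\beta_0$ consistent with the source paper.

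Two small points should be tidied.
\begin{itemize}
\item In the tail $|\theta|\ge 1-a$, the quantity you must bound from above is $\what\omega(1-\delta)$, not $\what\omega(1-|\theta|)$. Since $1-\delta\le 1-|\theta|$, monotonicity runs the wrong way. However, applying the growth inequality directly, using $1-\delta\le a$ and $\delta\le 2|\theta|$, gives $\what\omega(1-\delta)\lesssim(|\theta|/(1-a))^{\beta_0}\what\omega(a)$, which is exactly the bound you then use.
\item The region $\delta\ge1$ is $|\theta|\ge a$. It occurs for every $a$, not only for $a$ away from $1$. Adopt the convention $\what\omega(1-\delta):=\what\omega(0)$ there. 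The inner bound still holds, because $|1-are^{i\theta}|\gtrsim|\theta|\ge\delta/2$. Moreover, $\what\omega(0)\le A(1-a)^{-\beta_0}\what\omega(a)\le A(\delta/(1-a))^{\beta_0}\what\omega(a)$. So your tail integral over $[1-a,\pi]$ already covers this range, and no separate absorption argument is needed.
\end{itemize}
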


\subsection{A shifted sequence operator}\label{subsec:main-tools}

In this subsection, we obtain an upper estimate for
\(\Ces_{\mu,\beta}\) in terms of a shifted sequence operator, which will be a
key tool in the proof of our main results. Fix \(K>1\), and set
\begin{equation*}
	r_n=1-K^{-n},\qquad I_n=[r_n,r_{n+1}),\qquad n\ge0.
\end{equation*}
For a positive Borel measure \(\mu\) and radial weights \(\omega_i\), \(i=1,2\), let
\begin{equation*}
 m_n=\mu(I_n),\qquad M_n=\mu([r_n,1)),\qquad
 w_{i,n}=\int_{I_n}\omega_i(r)r\,dr.
\end{equation*}
Then, we have
$
 M_n=m_n+M_{n+1}.
$
For a sequence \(a=(a_n)_{n\ge0}\), define
\begin{equation}\label{eq:shifted-discrete-operator}
 (T_{\mu,\beta}^{K}a)_n
 :=K^{\beta n}M_{n+1}a_{n+1}
 +\sum_{k=0}^nK^{\beta k}m_ka_{k+1}.
\end{equation}

The next result connects the integral operator \(\Ces_{\mu,\beta}\) with the
shifted discrete operator defined above. It gives a pointwise bound and the corresponding mixed norm estimate for \(\Ces_{\mu,\beta}f\).

\begin{lemma}\label{thm:shifted-domination}
 For \(0<p,q<\infty\), \(\beta>0\), \stmtrev{\(K>1\), and let \(\mu\) be a
 positive Borel measure on \([0,1)\).} \roundrev{Let \(\omega_2\) be a radial
 weight. Then, for every \(f\in H(\mathbb D)\), \(n\ge0\), \(r\in I_n\), and
 \(\theta\in\mathbb R\), one has}
 \begin{equation}\label{eq:pointwise-discrete-domination}
 |\Ces_{\mu,\beta}f(re^{i\theta})|
 \le K^\beta
 \left[
 T_{\mu,\beta}^{K}
 \left(\bigl(R(f)(r_je^{i\theta})\bigr)_{j\ge0}\right)
 \right]_n.
 \end{equation}
 \roundrev{Moreover,}
 \begin{equation}\label{eq:mixed-discrete-domination}
 \|\Ces_{\mu,\beta}f\|_{\AL_p^q(\omega_2)}
 \le K^\beta
 \left\|
 T_{\mu,\beta}^{K}
 \left(\bigl(R(f)(r_je^{i\theta})\bigr)_{j\ge0}\right)
 \right\|_{L^q(\ell^p(w_2))},
 \end{equation}
where, for a sequence \(a(\theta)=(a_n(\theta))_{n\ge0}\), we write
\begin{equation*}
 \|a\|_{L^q(\ell^p(w_i))}
 :=\left(\int_0^{2\pi}
 \left(\sum_{n\ge0}w_{i,n}|a_n(\theta)|^p\right)^{q/p}
 \frac{d\theta}{2\pi}\right)^{1/q}.
\end{equation*}
\end{lemma}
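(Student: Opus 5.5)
We prove the pointwise bound \eqref{eq:pointwise-discrete-domination} first and then integrate it. Fix $n\ge0$, $r\in I_n$ and $\theta$, and put $z=re^{i\theta}$. Split the integral defining $\Ces_{\mu,\beta}f(z)$ over $[0,1)=\bigcup_{k\ge0}I_k$ into the pieces $k\le n$ and the tail $[r_{n+1},1)$. We estimate each piece with two elementary facts.

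\emph{Kernel estimate.} Since $|1-tz|\ge 1-t|z|\ge 1-t$ and $|1-tz|\ge 1-tr\ge 1-r$, we get $|1-tz|\ge \max(1-t,1-r)$. For $t\in I_k$ with $k\le n$ we use $|1-tz|\ge 1-t\ge 1-r_{k+1}=K^{-k-1}$. For $t\ge r_{n+1}$ we use $|1-tz|\ge 1-r\ge 1-r_{n+1}=K^{-n-1}$. Hence
\[
|1-tz|^{-\beta}\le K^{\beta}K^{\beta k}\quad (t\in I_k,\ k\le n),\qquad |1-tz|^{-\beta}\le K^{\beta}K^{\beta n}\quad (t\ge r_{n+1}).
\]

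\emph{Numerator estimate.} We want $|f(tz)|$ bounded by a value of $R(f)$ at a point $r_je^{i\theta}$. Since $tz=(tr)e^{i\theta}$, the definition of $R$ gives $|f(\rho e^{i\theta})|\le R(f)(se^{i\theta})$ whenever $\rho\le s$, because $\rho e^{i\theta}=(\rho/s)\,se^{i\theta}$ with $\rho/s\le1$. For $t\in I_k$ we have $tr\le t<r_{k+1}$, so $|f(tz)|\le R(f)(r_{k+1}e^{i\theta})$. For $t\ge r_{n+1}$ we have $tr\le r<r_{n+1}$, so $|f(tz)|\le R(f)(r_{n+1}e^{i\theta})$. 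The restriction of the radial maximal function to radii at most $1$ causes no trouble here, since every point involved has modulus less than $r_{k+1}$ or $r_{n+1}$. Also $R(f)$ is finite at each $r_je^{i\theta}$, since $f$ is continuous on the closed disc of radius $r_j$.

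Combining the two estimates gives
\[
|\Ces_{\mu,\beta}f(z)|\le K^\beta\Bigl(\sum_{k=0}^n K^{\beta k}m_k\,R(f)(r_{k+1}e^{i\theta})+K^{\beta n}M_{n+1}R(f)(r_{n+1}e^{i\theta})\Bigr).
\]
This is exactly $K^\beta\bigl[T^K_{\mu,\beta}\bigl((R(f)(r_je^{i\theta}))_{j}\bigr)\bigr]_n$ by \eqref{eq:shifted-discrete-operator}.

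For \eqref{eq:mixed-discrete-domination}, note that the right-hand side of the pointwise bound depends on $r$ only through the index $n$. Therefore
\[
\int_0^1|\Ces_{\mu,\beta}f(re^{i\theta})|^p\omega_2(r)r\,dr=\sum_n\int_{I_n}\cdots\le K^{\beta p}\sum_n w_{2,n}\bigl[T(\cdot)\bigr]_n^p.
\]
Raising this to the power $q/p$ and integrating in $\theta$ yields the mixed-norm inequality. If the right-hand side is infinite there is nothing to prove.

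The only delicate point is choosing the kernel lower bound so that the exponents match the definition of $T$. On $I_k$ with $k\le n$ we must use $1-t$, and on the tail we must use $1-r$. Each choice costs exactly one factor $K^\beta$, which is the $K^\beta$ appearing in the statement.
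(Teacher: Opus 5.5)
Your proposal is correct and follows essentially the same argument as the paper. Both split $[0,1)$ into the intervals $I_k$ with $k\le n$ and the tail $[r_{n+1},1)$, bound the kernel using $1-t$ on the first pieces and $1-r$ on the tail, and control $|f(tz)|$ by the radial maximal function at $r_{k+1}e^{i\theta}$ or $r_{n+1}e^{i\theta}$; the mixed-norm bound then comes from integrating over each $I_n$ against $\omega_2(r)r\,dr$.
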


\begin{proof}
 Fix \(r\in I_n\), \(t\in I_k\), and \(\theta\in\mathbb R\). If
 \(k\le n\), then
 \[
 |1-rte^{i\theta}|\ge1-rt\ge1-t>K^{-(k+1)},
 \qquad rt\le t<r_{k+1}.
 \]
 It follows that
 \[
 \frac{|f(rte^{i\theta})|}{|1-rte^{i\theta}|^\beta}
 \le K^\beta K^{\beta k}R(f)(r_{k+1}e^{i\theta}).
 \]
 If \(k\ge n+1\), then
 \[
 |1-rte^{i\theta}|\ge1-rt\ge1-r>K^{-(n+1)},
 \qquad rt\le r<r_{n+1},
 \]
 and hence
 \[
 \frac{|f(rte^{i\theta})|}{|1-rte^{i\theta}|^\beta}
 \le K^\beta K^{\beta n}R(f)(r_{n+1}e^{i\theta}).
 \]
 Integrating these two estimates gives
 \begin{align*}
 |\Ces_{\mu,\beta}f(re^{i\theta})|
 &\le K^\beta\left[
 \sum_{k=0}^{n}K^{\beta k}m_k
 R(f)(r_{k+1}e^{i\theta})\right.\left.
 +K^{\beta n}M_{n+1}
 R(f)(r_{n+1}e^{i\theta})\right],
 \end{align*}
 which is \eqref{eq:pointwise-discrete-domination}. 
 
 For each fixed \(\theta\), we have
 \begin{align*}
 \int_0^1|\Ces_{\mu,\beta}f(re^{i\theta})|^p
 \omega_2(r)r\,dr&=\sum_{n\ge0}\int_{I_n}
 |\Ces_{\mu,\beta}f(re^{i\theta})|^p\omega_2(r)r\,dr\\
 &\le K^{\beta p}\sum_{n\ge0}w_{2,n}
 \left|
 \left[
 T_{\mu,\beta}^{K}
 \left(\bigl(R(f)(r_je^{i\theta})\bigr)_{j\ge0}\right)
 \right]_n
 \right|^p.
 \end{align*}
 Consequently,
 \[
 \begin{aligned}
 \|\Ces_{\mu,\beta}f\|_{\AL_p^q(\omega_2)}
 &=\left(
 \int_0^{2\pi}
 \left[
 \int_0^1 |\Ces_{\mu,\beta}f(re^{i\theta})|^p
 \omega_2(r)r\,dr
 \right]^{q/p}
 \frac{d\theta}{2\pi}
 \right)^{1/q}\\
 &\le K^\beta
 \left(
 \int_0^{2\pi}
 \left[
 \sum_{n\ge0}w_{2,n}
 \left|
 \left[
 T_{\mu,\beta}^{K}
 \left(\bigl(R(f)(r_je^{i\theta})\bigr)_{j\ge0}\right)
 \right]_n
 \right|^p
 \right]^{q/p}
 \frac{d\theta}{2\pi}
 \right)^{1/q}\\
 &=K^\beta
 \left\|
 T_{\mu,\beta}^{K}
 \left(\bigl(R(f)(r_je^{i\theta})\bigr)_{j\ge0}\right)
 \right\|_{L^q(\ell^p(w_2))},
 \end{aligned}
 \]
 which proves \eqref{eq:mixed-discrete-domination}.

\end{proof}

Since Lemma~\ref{thm:shifted-domination} holds for every \(K>1\), the
following lemma chooses \(K\) sufficiently large for the doubling
conditions of both weights and establishes the comparisons between
\(w_{i,n}\) and \(\widehat\omega_i(r_n)\), \(i=1,2\), needed below.

\medskip
\begin{lemma}\label{prop:common-grid}
Let \(\omega_1,\omega_2\in\Dclass\). There exist \(K\ge2\), integers
\(n_0\ge0\) and \(j_0\ge1\), and constants \(A,B>1\) such that, for the
corresponding grid \(r_n=1-K^{-n}\), \(I_n=[r_n,r_{n+1})\), one has

\begin{equation}\label{eq:adjacent-tail-ratio}
 A^{-j_0}\widehat\omega_i(r_n)
 \le\widehat\omega_i(r_{n+1})
 \le B^{-1}\widehat\omega_i(r_n)
\end{equation}
for
\(i=1,2\) and \(n\ge n_0\). Moreover,
\begin{equation}\label{eq:interval-tail-comparison}
 \frac{1-B^{-1}}{2}\widehat\omega_i(r_n)
 \le w_{i,n}\le(1-A^{-j_0})\widehat\omega_i(r_n),
\end{equation}
and
\begin{equation}\label{eq:discrete-tail-comparison}
 \frac12\widehat\omega_i(r_n)
 \le\sum_{j\ge n}w_{i,j}\le\widehat\omega_i(r_n)
\end{equation} for
\(i=1,2\) and \(n\ge n_0\). 
\end{lemma}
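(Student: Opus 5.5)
We will choose $K$ as a suitable power of $2$ and of $\kappa$ so that both doubling conditions can be iterated along the grid. From \eqref{eq:D-plus}, iterating $j$ times gives $\what\omega_i(r)\le A^{j}\what\omega_i(1-2^{-j}(1-r))$ for all $r$. From \eqref{eq:D-minus}, iterating gives $\what\omega_i(r)\ge B_i^{l}\what\omega_i(1-\kappa_i^{-l}(1-r))$ for $r\ge r_{0,i}$. Take a common $A=\max(A_1,A_2)$. Set $\kappa=\max(\kappa_1,\kappa_2)$ and $B=\min(B_1,B_2)$. Since $\what\omega_i$ is nonincreasing, shrinking $1-r$ by a factor at least $\kappa_i$ still gives $\what\omega_i(r)\ge B_i\what\omega_i(\text{new point})$. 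Hence \eqref{eq:D-minus} holds for both weights with the common constants $\kappa$ and $B$, once $r\ge r_0:=\max(r_{0,1},r_{0,2})$.

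Now fix $j_0\ge1$ with $2^{j_0}\ge\kappa$ and put $K=2^{j_0}\ge2$. Then $1-r_{n+1}=K^{-1}(1-r_n)$. Applying the iterated $\Dhat$ inequality $j_0$ times gives $\what\omega_i(r_n)\le A^{j_0}\what\omega_i(r_{n+1})$. For the other side, $1-r_{n+1}\le(1-r_n)/\kappa$ and $\what\omega_i$ is monotone, so $\what\omega_i(r_{n+1})\le\what\omega_i(1-(1-r_n)/\kappa)\le B^{-1}\what\omega_i(r_n)$. This holds whenever $r_n\ge r_0$, i.e.\ for $n\ge n_0$, with $n_0$ the first index with $r_{n_0}\ge r_0$. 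This proves \eqref{eq:adjacent-tail-ratio}. We may enlarge $A$ so that $A^{-j_0}<1$; this is automatic since $A>1$.

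For \eqref{eq:interval-tail-comparison}, write $\int_{I_n}\omega_i(r)\,dr=\what\omega_i(r_n)-\what\omega_i(r_{n+1})$. By \eqref{eq:adjacent-tail-ratio} this lies between $(1-B^{-1})\what\omega_i(r_n)$ and $(1-A^{-j_0})\what\omega_i(r_n)$. The factor $r$ in $w_{i,n}$ satisfies $1/2\le r_n\le r<1$ for $n\ge1$, because $K\ge2$. So $w_{i,n}$ lies between half the lower bound and the full upper bound, which is \eqref{eq:interval-tail-comparison}. The index $n=0$ is the one place needing care, since there $r_0=0$. We handle it by taking $n_0\ge1$.

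For \eqref{eq:discrete-tail-comparison}, the intervals $I_j$, $j\ge n$, partition $[r_n,1)$. Hence $\sum_{j\ge n}w_{i,j}=\int_{r_n}^1\omega_i(r)r\,dr$. This integral is at most $\what\omega_i(r_n)$ because $r<1$, and at least $\tfrac12\what\omega_i(r_n)$ because $r\ge r_n\ge1/2$ when $n\ge1$.

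The main obstacle is making the constants uniform across the two weights and consistent with a single grid. This is handled by taking maxima and minima of the constants and choosing $K=2^{j_0}$ adapted to $\kappa$. Beyond that the argument is bookkeeping with the monotonicity of $\what\omega_i$.
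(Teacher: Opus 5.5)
Your proof is correct and follows the paper's argument. You take common constants via max/min and the monotonicity of $\what\omega_i$, iterate the $\Dhat$ condition $j_0$ times for the lower bound, apply the $\Dcheck$ condition once for the upper bound, and then use $\int_{I_n}\omega_i=\what\omega_i(r_n)-\what\omega_i(r_{n+1})$ together with $r\ge 1/2$. The only differences are cosmetic: you choose $K=2^{j_0}\ge\kappa$ so the iteration lands exactly on $r_{n+1}$, where the paper takes any $K\ge\max\{2,\kappa\}$ with $2^{j_0}\ge K$ and uses monotonicity, and you use $r_0$ both for the $\Dcheck$ threshold and for the grid point $r_0=0$, which should be renamed.
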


\begin{proof}
For \(i=1,2\), choose \(A_i>1\), \(B_i>1\), \(\kappa_i>1\), and
\(r_i^*<1\) as in \eqref{eq:D-plus}--\eqref{eq:D-minus}. Set
\[
 A=\max\{A_1,A_2\},\qquad B=\min\{B_1,B_2\},\qquad
 \kappa=\max\{\kappa_1,\kappa_2\},\qquad r_*=\max\{r_1^*,r_2^*\}.
\]
By the monotonicity of \(\widehat\omega_1\) and \(\widehat\omega_2\),
both estimates remain valid with these common constants.

Choose \(K\ge\max\{2,\kappa\}\). For the corresponding grid,
\[
 r_{n+1}=1-K^{-(n+1)}=1-\frac{1-r_n}{K}.
\]
Then choose \(n_0\) so that
\(r_{n_0}\ge\max\{1/2,r_*\}\), and choose \(j_0\ge1\) such that
\(2^{j_0}\ge K\). Thus \(r\ge1/2\) whenever \(n\ge n_0\) and
\(r\in I_n\).

Iterating \eqref{eq:D-plus} \(j_0\) times and using \(2^{j_0}\ge K\)
gives
\[
 \widehat\omega_i(r_n)
 \le A^{j_0}\widehat\omega_i\!\left(1-\frac{1-r_n}{2^{j_0}}\right)
 \le A^{j_0}\widehat\omega_i(r_{n+1}).
\]
Hence
\(\widehat\omega_i(r_{n+1})\ge
A^{-j_0}\widehat\omega_i(r_n)\). On the other hand, \(K\ge\kappa\)
implies
\[
 \widehat\omega_i(r_{n+1})
 \le\widehat\omega_i\!\left(1-\frac{1-r_n}{\kappa}\right)
 \le B^{-1}\widehat\omega_i(r_n).
\]
This proves \eqref{eq:adjacent-tail-ratio}.

It follows that
\[
 (1-B^{-1})\widehat\omega_i(r_n)
 \le\int_{r_n}^{r_{n+1}}\omega_i(r)\,dr
 \le(1-A^{-j_0})\widehat\omega_i(r_n).
\]
Since \(1/2\le r<1\) on \(I_n\) for \(n\ge n_0\),
\[
 \frac12\int_{r_n}^{r_{n+1}}\omega_i(r)\,dr
 \le w_{i,n}
 \le\int_{r_n}^{r_{n+1}}\omega_i(r)\,dr,
\]
which proves \eqref{eq:interval-tail-comparison}. Finally,
\(\sum_{j\ge n}w_{i,j}=\int_{r_n}^1\omega_i(r)r\,dr\), and
\(r_n\ge1/2\) gives \eqref{eq:discrete-tail-comparison}.
\end{proof}

\section{Boundedness of the Ces\`aro-type operator}
\label{sec:main-theorem}

In this section, we characterize the boundedness of
\(\Ces_{\mu,\beta}:\AL_p^q(\omega_1)\to\AL_p^q(\omega_2)\). Let \(K\) and
\(n_0\) be as in Lemma~\ref{prop:common-grid}. For every nonnegative sequence \(a=(a_n)_{n\ge0}\),
we consider the truncated sequence operator 
\begin{equation*}
 (\Tcut a)_n
 =K^{\beta n}M_{n+1}a_{n+1}
 +\sum_{k=n_0}^{n}K^{\beta k}m_k a_{k+1},
 \qquad n\ge n_0.
\end{equation*}
The following lemma is essential to proving boundedness, as it allows us to reduce the norm estimate for $\Ces_{\mu,\beta}$ to an estimate for $\Tcut$.

\medskip
\begin{lemma}\label{prop:finite-core}
For \(0<p,q<\infty\), \(\beta>0\), \(\omega_1,\omega_2\in\Dclass\),
and let \(\mu\) be a positive Borel measure on \([0,1)\). Let \(K\) and
\(n_0\) be as in Lemma~\ref{prop:common-grid}. Then, for every
\(f\in H(\mathbb D)\),
\begin{equation}\label{eq:finite-core-estimate}
 \|\Ces_{\mu,\beta}f\|_{\AL_p^q(\omega_2)}
 \lesssim K^\beta\left\|\Tcut\big((R(f)(r_je^{i\theta}))_{j\ge0}\big)\right\|_{L^q(\ell^p(w_2;n\ge n_0))}
 +\mu([0,1))\|f\|_{\AL_p^q(\omega_1)}.
\end{equation}
\end{lemma}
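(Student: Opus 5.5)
Starting from the pointwise bound \eqref{eq:pointwise-discrete-domination} in Lemma~\ref{thm:shifted-domination}, fix \(\theta\) and write \(a_j=R(f)(r_je^{i\theta})\). For \(n\ge n_0\) I split
\[
 (T_{\mu,\beta}^Ka)_n=(\Tcut a)_n+\sum_{k=0}^{n_0-1}K^{\beta k}m_ka_{k+1},
\]
and for \(0\le n<n_0\) I bound \((T_{\mu,\beta}^Ka)_n\) directly. Since \(n_0\) and \(K\) depend only on \(\omega_1,\omega_2\), every index at most \(n_0\) is a fixed finite set. For those indices I use \(K^{\beta k}\le K^{\beta n_0}\) and \(m_k, M_{n+1}\le\mu([0,1))\). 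Monotonicity of \(R(f)(\rho e^{i\theta})\) in \(\rho\) gives \(a_{k+1}\le a_{n_0+1}\le R(f)(r_{n_0+1}e^{i\theta})\). Hence for all \(n\):
\[
 |\Ces_{\mu,\beta}f(re^{i\theta})|\le K^\beta(\Tcut a)_n\mathbf 1_{n\ge n_0}+C\,\mu([0,1))\,R(f)(r_{n_0+1}e^{i\theta}),\qquad r\in I_n,
\]
with \(C=C(K,n_0,\beta)\). The term \(n<n_0\) involving \(M_{n+1}a_{n+1}\) is covered the same way.

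Next I take the \(L^q(\ell^p(w_2))\) quasinorm and use the quasi-triangle inequality, valid for all \(0<p,q<\infty\) up to constants. The first piece is exactly the first term of \eqref{eq:finite-core-estimate}. The second piece is \(\mu([0,1))\,\omega_2\)-mass\({}^{1/p}\) times \(\|R(f)(r_{n_0+1}e^{i\theta})\|_{L^q(d\theta)}\). So it remains to show
\[
 \Bigl(\int_0^{2\pi}R(f)(\rho e^{i\theta})^q\,\frac{d\theta}{2\pi}\Bigr)^{1/q}\lesssim\|f\|_{\AL_p^q(\omega_1)},\qquad \rho=r_{n_0+1}.
\]

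This last estimate is the main obstacle; I would handle it via the radial maximal function. For \(s\in[\rho,1)\) we have \(R(f)(\rho e^{i\theta})\le R(f)(se^{i\theta})\) because the sup over a longer radius is larger. Averaging in \(s\) against \(\omega_1(s)s\,ds\) on \([\rho,1)\) gives
\[
 R(f)(\rho e^{i\theta})^p\le\frac{1}{\int_\rho^1\omega_1 s\,ds}\int_0^1R(f)(se^{i\theta})^p\omega_1(s)s\,ds.
\]
The denominator is \(\gtrsim\what\omega_1(\rho)>0\), a constant depending only on the weights. Raising this to the power \(q/p\), integrating in \(\theta\), and applying Lemma~\ref{lem:cited-maximal} yields the bound by \(\|R(f)\|_{L_p^q(\omega_1)}\lesssim\|f\|_{\AL_p^q(\omega_1)}\).

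All constants depend only on \(p,q,\beta,\omega_1,\omega_2\) through \(K,n_0\), which gives \eqref{eq:finite-core-estimate}.
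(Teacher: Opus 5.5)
Your proposal is correct and follows essentially the same route as the paper: you isolate the finitely many indices below \(n_0\), bound them crudely by a constant times \(\mu([0,1))\,R(f)\) at a fixed radius, and control that radius using the monotonicity of \(R(f)\) along rays together with Lemma~\ref{lem:cited-maximal}. The differences are only bookkeeping. You apply Lemma~\ref{thm:shifted-domination} to \(\mu\) itself and peel off the terms \(k<n_0\) at the level of the sequence operator, whereas the paper splits the integral at \(t=r_{n_0}\) and applies that lemma to \(\mu|_{[r_{n_0},1)}\). You also average over the whole tail \([r_{n_0+1},1)\) instead of the single interval \(I_{n_0}\).
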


\begin{proof}
Fix \(\theta\). We consider the defining integral of \(\Ces_{\mu,\beta}\) in three cases. When
\(r\ge r_{n_0}\), we split it at \(t=r_{n_0}\) as
\[
 (\Ces_{\mu,\beta}f)(re^{i\theta})
 =\int_{[0,r_{n_0})}
 \frac{f(rte^{i\theta})}{(1-rte^{i\theta})^\beta}\,d\mu(t)
 +\int_{[r_{n_0},1)}
 \frac{f(rte^{i\theta})}{(1-rte^{i\theta})^\beta}\,d\mu(t),
\]
whereas for \(r<r_{n_0}\) we keep the defining integral unchanged. We estimate these
three cases separately.

\emph{Case 1: For \(r<r_{n_0}\).} We have
\(rt<r_{n_0}\) and \(1-rt>K^{-n_0}\) for every \(0\le t<1\). Hence
\[
 |\Ces_{\mu,\beta}f(re^{i\theta})|
 \le K^{\beta n_0}\mu([0,1))R(f)(r_{n_0}e^{i\theta}).
\]
Integrating over \(\bigcup_{n<n_0}I_n=[0,r_{n_0})\) gives
\begin{equation}\label{eq:finite-core-inner-output}
 \left(
 \sum_{n<n_0}\int_{I_n}|\Ces_{\mu,\beta}f(re^{i\theta})|^p
 \omega_2(r)r\,dr
 \right)^{1/p}
 \le K^{\beta n_0}\mu([0,1))
 \left(\sum_{n<n_0}w_{2,n}\right)^{1/p}R(f)(r_{n_0}e^{i\theta}).
\end{equation}

\emph{Case 2: For \(r\ge r_{n_0}\) and \(t<r_{n_0}\).}
For the part of the defining integral over \([0,r_{n_0})\), we have
\(rt<r_{n_0}\) and \(1-rt>K^{-n_0}\). Thus this part is bounded by
\(K^{\beta n_0}\mu([0,r_{n_0}))R(f)(r_{n_0}e^{i\theta})\), and its
\(L^p\)-quasinorm over \([r_{n_0},1)\) is at most
\begin{equation}\label{eq:finite-core-inner-input}
 K^{\beta n_0}\mu([0,r_{n_0}))
 \left(\sum_{n\ge n_0}w_{2,n}\right)^{1/p}R(f)(r_{n_0}e^{i\theta}).
\end{equation}

\emph{Case 3: For \(r,t\ge r_{n_0}\).} Consider the part
of the defining integral over \([r_{n_0},1)\), and set
\(\nu=\mu|_{[r_{n_0},1)}\). For \(n\ge n_0\),
\[
 \nu(I_k)=0\quad(k<n_0),\qquad
 \nu(I_k)=m_k\quad(k\ge n_0),\qquad
 \nu([r_{n+1},1))=M_{n+1}.
\]
Hence the shifted sequence operator associated with \(\nu\) is exactly
\(\Tcut\), and Lemma~\ref{thm:shifted-domination} gives
\[
 \left|
 \int_{[r_{n_0},1)}
 \frac{f(rte^{i\theta})}{(1-rte^{i\theta})^\beta}\,d\mu(t)
 \right|
 \le K^\beta
 \big[\Tcut((R(f)(r_je^{i\theta}))_{j\ge0})\big]_n.
\]

Combining the three cases, using the triangle inequality when \(p\ge1\)
and \((x+y)^p\le x^p+y^p\) when \(0<p<1\), yields
\[
\begin{aligned}
 \left(\int_0^1|\Ces_{\mu,\beta}f(re^{i\theta})|^p
 \omega_2(r)r\,dr\right)^{1/p}
 &\lesssim_p K^\beta
 \left\|\Tcut((R(f)(r_je^{i\theta}))_{j\ge0})\right\|_
 {\ell^p(w_2;n\ge n_0)}\\
 &\quad+K^{\beta n_0}\mu([0,1))
 \left(\sum_{n<n_0}w_{2,n}\right)^{1/p}R(f)(r_{n_0}e^{i\theta})\\
 &\quad+K^{\beta n_0}\mu([0,r_{n_0}))
 \left(\sum_{n\ge n_0}w_{2,n}\right)^{1/p}R(f)(r_{n_0}e^{i\theta}).
\end{aligned}
\]
For \(r\in I_j\), the radial maximal function is increasing along each
radius, and hence
\[
 w_{1,n_0}R(f)(r_{n_0}e^{i\theta})^p
 \le \sum_{j\ge0}w_{1,j}R(f)(r_je^{i\theta})^p
 \le\int_0^1R(f)(re^{i\theta})^p\omega_1(r)r\,dr.
\]
Therefore,
\begin{equation}\label{eq:radial-max-at-rn0}
 \|R(f)(r_{n_0}e^{i\theta})\|_{L^q(\mathbb T)}
 \le w_{1,n_0}^{-1/p}\left\|(R(f)(r_je^{i\theta}))_{j\ge0}\right\|_{L^q(\ell^p(w_1))}
 \lesssim w_{1,n_0}^{-1/p}\|f\|_{\AL_p^q(\omega_1)}.
\end{equation}
Finally, since
\[
 \sum_{n<n_0}w_{2,n}
 =\int_0^{r_{n_0}}\omega_2(r)r\,dr<\infty,
 \qquad
 \sum_{n\ge n_0}w_{2,n}
 =\int_{r_{n_0}}^1\omega_2(r)r\,dr<\infty,
\]
\(w_{1,n_0}>0\) by \eqref{eq:interval-tail-comparison}, and
\(\mu([0,r_{n_0}))\le\mu([0,1))\), it follows from
\eqref{eq:radial-max-at-rn0} that the last two terms on the right-hand side
of the estimate above are bounded in \(L^q(\mathbb T)\) by
\(C\,\mu([0,1))\|f\|_{\AL_p^q(\omega_1)}\). This proves
\eqref{eq:finite-core-estimate}. The implicit constant depends only on
\(p,q,\beta,K,n_0\) and $\omega_1,\omega_2$.
\end{proof}

For \(1/2\le r<1\), set
\begin{equation}\label{eq:E-general}
 E(r)
 :=\frac{\mu([r,1))}{(1-r)^\beta}
 \left(\frac{\widehat\omega_2(r)}
 {\widehat\omega_1(r)}\right)^{1/p}.
\end{equation}
Hence,
\[
 \Ctail(\mu;\omega_1,\omega_2)
 =\mu([0,1))+\sup_{1/2\le r<1}E(r).
\]
Next, we compare this continuous quantity with its discretized version.

\begin{lemma}\label{prop:continuous-discrete-tail}
For \(0<p<\infty\), \(\beta>0\),
 \(\omega_1,\omega_2\in\Dclass\), and let \(\mu\) be a positive Borel
 measure on \([0,1)\). Let \(K\) and \(n_0\) be as in
 Lemma~\ref{prop:common-grid}. Then
 \begin{equation}\label{eq:C-E-equivalence}
 \Ctail(\mu;\omega_1,\omega_2)
 \asymp\mu([0,1))+\sup_{n\ge n_0}E(r_n).
 \end{equation}
\end{lemma}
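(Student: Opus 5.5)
The lower bound is immediate: since $r_{n_0}\ge 1/2$, every $E(r_n)$ with $n\ge n_0$ appears in the supremum defining $\Ctail(\mu;\omega_1,\omega_2)$. Hence
\[
\mu([0,1))+\sup_{n\ge n_0}E(r_n)\le\Ctail(\mu;\omega_1,\omega_2).
\]
It remains to bound $E(r)$ for every $r\in[1/2,1)$ by a constant multiple of $\mu([0,1))+\sup_{n\ge n_0}E(r_n)$. We split according to whether $r$ is near $1$.

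\emph{Case $r\ge r_{n_0}$.} Choose $n\ge n_0$ with $r\in I_n$, so $r_n\le r<r_{n+1}$, and compare $E(r)$ with $E(r_n)$ factor by factor.
\begin{itemize}
\item The measure factor is monotone: $\mu([r,1))\le\mu([r_n,1))$.
\item For the kernel factor, $1-r>1-r_{n+1}=K^{-1}(1-r_n)$, so $(1-r)^{-\beta}\le K^{\beta}(1-r_n)^{-\beta}$.
\item For the weights, monotonicity gives $\what\omega_2(r)\le\what\omega_2(r_n)$.
\item Also $\what\omega_1(r)\ge\what\omega_1(r_{n+1})\ge A^{-j_0}\what\omega_1(r_n)$ by \eqref{eq:adjacent-tail-ratio}.
\end{itemize}
Combining these, $E(r)\le K^\beta A^{j_0/p}E(r_n)$. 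This is the only place where the doubling condition enters.

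\emph{Case $1/2\le r<r_{n_0}$.} Here we use the trivial bounds $\mu([r,1))\le\mu([0,1))$ and $(1-r)^{-\beta}\le K^{\beta n_0}$. The quotient $\what\omega_2(r)/\what\omega_1(r)$ is bounded on $[1/2,r_{n_0}]$: the numerator is at most $\what\omega_2(0)$, and the denominator is at least $\what\omega_1(r_{n_0})>0$ by the standing positivity assumption on tails. Hence $E(r)\le C\mu([0,1))$, with $C$ depending on $K,n_0,\beta,p,\omega_1,\omega_2$.

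Taking the supremum over $r\in[1/2,1)$ and combining the two cases gives the reverse inequality, and therefore \eqref{eq:C-E-equivalence}. No step is a genuine obstacle. The only point needing care is that the lower bound on $\what\omega_1(r)$ in the first case must come from the left-hand inequality in \eqref{eq:adjacent-tail-ratio} at index $n$. The second case is where the term $\mu([0,1))$ is used, absorbing the non-asymptotic range of $r$.
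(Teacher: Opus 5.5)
Your proposal is correct and follows the paper's proof essentially line by line: the same immediate lower bound, the same factor-by-factor comparison $E(r)\le K^\beta A^{j_0/p}E(r_n)$ for $r\in I_n$, $n\ge n_0$ (using the left inequality in \eqref{eq:adjacent-tail-ratio}), and the same absorption of the range $[1/2,r_{n_0})$ into $\mu([0,1))$. The only cosmetic difference is that you bound $\what\omega_2(r)$ by $\what\omega_2(0)$ where the paper uses $\what\omega_2(1/2)$, which is immaterial.
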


\begin{proof}
 By construction, \(r_n\ge1/2\) for \(n\ge n_0\). Hence
 \(\mu([0,1))+\sup_{n\ge n_0}E(r_n)
 \le \Ctail(\mu;\omega_1,\omega_2)\).

 Conversely, fix \(r\in[r_{n_0},1)\). Then there exists a unique
 \(n\ge n_0\) such that \(r\in I_n\). Hence
 \(\mu([r,1))\le M_n\) and
 \((1-r)^{-\beta}\le K^\beta K^{\beta n}\).

 Moreover, by monotonicity and Lemma~\ref{prop:common-grid},
 \[
 \widehat\omega_2(r)\le\widehat\omega_2(r_n),\qquad
 \widehat\omega_1(r)\ge\widehat\omega_1(r_{n+1})
 \ge A^{-j_0}\widehat\omega_1(r_n).
 \]
 Hence
 \[
 \frac{\widehat\omega_2(r)}{\widehat\omega_1(r)}
 \le A^{j_0}
 \frac{\widehat\omega_2(r_n)}{\widehat\omega_1(r_n)}.
 \]
 It follows that
 \[
  E(r)\le K^\beta A^{j_0/p}E(r_n)
  \le K^\beta A^{j_0/p}\sup_{j\ge n_0}E(r_j).
 \]

 For \(r\in[1/2,r_{n_0})\), we have
 \[
  E(r)
  \le K^{\beta n_0}
  \left(\frac{\widehat\omega_2(1/2)}
  {\widehat\omega_1(r_{n_0})}\right)^{1/p}
  \mu([0,1))
  \lesssim \mu([0,1)).
 \]
 Therefore,
 \(\Ctail(\mu;\omega_1,\omega_2)
 \lesssim \mu([0,1))+\sup_{n\ge n_0}E(r_n)\), which proves
 \eqref{eq:C-E-equivalence}. The implicit constants depend only on
 \(p,\beta,K,n_0\) and $\omega_1,\omega_2$.
\end{proof}

We need the following lemma for the proof of the main theorem.

\begin{lemma}\label{prop:truncated-sequence-bound}
 For \(0<p<\infty\), \(\beta>0\),
 \(\omega_1,\omega_2\in\Dclass\), and let \(\mu\) be a positive Borel
 measure on \([0,1)\). Let \(K\) and \(n_0\) be as in
 Lemma~\ref{prop:common-grid}. For every nonnegative
 sequence \(a=(a_n)_{n\ge0}\),
 \begin{equation}\label{eq:truncated-sequence-bound}
 \left\|\Tcut a\right\|_{\ell^p(w_2;n\ge n_0)}
 \lesssim
 \left(\sup_{j\ge n_0}E(r_j)\right)
 \left(\sum_{k\ge n_0}w_{1,k+1}a_{k+1}^p\right)^{1/p}.
 \end{equation}
\end{lemma}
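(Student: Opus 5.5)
The plan is to split \(\Tcut a\) into its two pieces,
\[
(\Tcut a)_n=D_n+S_n,\qquad D_n=K^{\beta n}M_{n+1}a_{n+1},\qquad S_n=\sum_{k=n_0}^nK^{\beta k}m_ka_{k+1}.
\]
Using the quasi-triangle inequality in \(\ell^p(w_2;n\ge n_0)\), it suffices to bound each piece separately by \(\sup_{j\ge n_0}E(r_j)\) times the right-hand side of \eqref{eq:truncated-sequence-bound}. Write \(\mathcal S=\sup_{j\ge n_0}E(r_j)\). The definition \eqref{eq:E-general} together with \(1-r_j=K^{-j}\) gives
\[
M_j\le \mathcal S\,K^{-\beta j}\left(\frac{\widehat\omega_1(r_j)}{\widehat\omega_2(r_j)}\right)^{1/p},\qquad j\ge n_0 .
\]
This single inequality converts \(\mu\)-information into weight ratios. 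All remaining comparisons come from Lemma~\ref{prop:common-grid}. From \eqref{eq:adjacent-tail-ratio} we use \(\widehat\omega_i(r_n)\le A^{j_0}\widehat\omega_i(r_{n+1})\) and the geometric decay \(\widehat\omega_2(r_n)\le B^{-(n-k)}\widehat\omega_2(r_k)\) for \(n\ge k\ge n_0\). From \eqref{eq:interval-tail-comparison} we use \(w_{i,n}\asymp\widehat\omega_i(r_n)\).

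\emph{Diagonal term.} Apply the bound with \(j=n+1\), so that \(K^{\beta n}M_{n+1}\le K^{-\beta}\mathcal S\,(\widehat\omega_1(r_{n+1})/\widehat\omega_2(r_{n+1}))^{1/p}\). Then estimate \(w_{2,n}\le\widehat\omega_2(r_n)\le A^{j_0}\widehat\omega_2(r_{n+1})\) and \(\widehat\omega_1(r_{n+1})\lesssim w_{1,n+1}\). This gives
\[
w_{2,n}D_n^p\lesssim \mathcal S^p\,w_{1,n+1}a_{n+1}^p .
\]
Summing over \(n\ge n_0\) yields the claim for \(D\).

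\emph{Sum term.} Since \(m_k\le M_k\), we get \(K^{\beta k}m_ka_{k+1}\le \mathcal S\,c_k\), where \(c_k=(\widehat\omega_1(r_k)/\widehat\omega_2(r_k))^{1/p}a_{k+1}\). It then suffices to prove the discrete Hardy-type inequality
\[
\sum_{n\ge n_0}w_{2,n}\Big(\sum_{k=n_0}^nc_k\Big)^p\lesssim\sum_{k\ge n_0}\widehat\omega_2(r_k)c_k^p=\sum_{k\ge n_0}\widehat\omega_1(r_k)a_{k+1}^p .
\]
The last sum is \(\lesssim\sum_k w_{1,k+1}a_{k+1}^p\) by \(\widehat\omega_1(r_k)\le A^{j_0}\widehat\omega_1(r_{k+1})\lesssim w_{1,k+1}\). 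The Hardy inequality is the main step, and it is where the reverse doubling \(\Dcheck\) of \(\omega_2\) is essential. I would prove it in two cases.
\begin{itemize}
\item For \(0<p\le1\), use \((\sum c_k)^p\le\sum c_k^p\) and interchange the sums. Then \(\sum_{n\ge k}w_{2,n}\le\widehat\omega_2(r_k)\) by \eqref{eq:discrete-tail-comparison}.
\item For \(p>1\), insert the factors \(B^{\pm(n-k)/(2p)}\) and apply Hölder. This gives \((\sum_{k\le n}c_k)^p\le C_{B,p}\sum_{k\le n}c_k^pB^{(n-k)/2}\), since \(\sum_{k\le n}B^{-(n-k)p'/(2p)}\) is a convergent geometric series. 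Interchanging the sums, we then need \(\sum_{n\ge k}w_{2,n}B^{(n-k)/2}\le\sum_{n\ge k}\widehat\omega_2(r_n)B^{(n-k)/2}\). By the geometric decay this is \(\le\widehat\omega_2(r_k)\sum_{n\ge k}B^{-(n-k)/2}\lesssim\widehat\omega_2(r_k)\).
\end{itemize}
Combining the two pieces proves \eqref{eq:truncated-sequence-bound}. The constants depend only on \(p,\beta,K,A,B,j_0\).
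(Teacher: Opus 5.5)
Your proposal is correct and follows essentially the paper's route. You split off the diagonal term, use $m_k\le M_k$ together with the definition of $E(r_k)$, and reduce the sum term to a geometric-decay estimate driven by $\widehat\omega_2(r_n)\le B^{-(n-k)}\widehat\omega_2(r_k)$, treating $p>1$ and $p\le1$ separately. The differences are cosmetic: the paper absorbs $w_{2,n}^{1/p}$ into the kernel $B^{-(n-k)/p}$ and cites Young's inequality, while your H\"older splitting is the standard proof of that same bound; also, in the $p\le1$ case your sum term uses only the tail estimate \eqref{eq:discrete-tail-comparison}, not the geometric decay.
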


\begin{proof}
 Set \(x_k=w_{1,k+1}^{1/p}a_{k+1}\) for \(k\ge n_0\), and
 \(x_k=0\) for \(k<n_0\). By Lemma~\ref{prop:common-grid},
 \begin{equation}\label{eq:grid-weights-for-direct-bound}
 w_{2,n}\lesssim\widehat\omega_2(r_n),
 \qquad
 w_{1,k+1}\gtrsim\widehat\omega_1(r_k),
 \qquad n,k\ge n_0.
 \end{equation}
 Since \(M_{n+1}\le M_n\), the definition of \(E(r_n)\) and
 \eqref{eq:grid-weights-for-direct-bound} give
 \[
 w_{2,n}^{1/p}K^{\beta n}M_{n+1}a_{n+1}
 \lesssim \left(\sup_{j\ge n_0}E(r_j)\right)x_n.
 \]
 Similarly, \(m_k\le M_k\) implies
 \[
 \begin{aligned}
 w_{2,n}^{1/p}\sum_{k=n_0}^{n}K^{\beta k}m_ka_{k+1}
 &\lesssim \left(\sup_{j\ge n_0}E(r_j)\right)
 \sum_{k=n_0}^{n}
 \left(\frac{\widehat\omega_2(r_n)}
 {\widehat\omega_2(r_k)}\right)^{1/p}x_k.
 \end{aligned}
 \]
 Iterating the upper estimate in \eqref{eq:adjacent-tail-ratio}, we have
 \(\widehat\omega_2(r_n)\le B^{-(n-k)}\widehat\omega_2(r_k)\) for
 \(n\ge k\ge n_0\). Thus
 \begin{equation}\label{eq:geometric-convolution-domination}
 w_{2,n}^{1/p}(\Tcut a)_n
 \lesssim
 \left(\sup_{j\ge n_0}E(r_j)\right)
 \sum_{k=n_0}^{n}B^{-(n-k)/p}x_k,
 \qquad n\ge n_0.
 \end{equation}

 Suppose first that \(p\ge1\). Viewing the sum in
 \eqref{eq:geometric-convolution-domination} as the convolution of \(x\)
 with the \(\ell^1\)-sequence \(\{B^{-j/p}\}_{j\ge0}\), the discrete Young
 inequality gives
 \[
 \left[\sum_{n\ge n_0}\left(\sum_{k=n_0}^{n}B^{-(n-k)/p}x_k\right)^p\right]^{1/p}
 \le \left(\sum_{j\ge0}B^{-j/p}\right)
 \left(\sum_{k\ge n_0}x_k^p\right)^{1/p}
 \lesssim \left(\sum_{k\ge n_0}x_k^p\right)^{1/p}.
 \]
 If \(0<p<1\), then subadditivity yields
 \[
 \sum_{n\ge n_0}\left(\sum_{k=n_0}^{n}B^{-(n-k)/p}x_k\right)^p
 \le \sum_{n\ge n_0}\sum_{k=n_0}^{n}B^{-(n-k)}x_k^p
 =\frac{1}{1-B^{-1}}\sum_{k\ge n_0}x_k^p.
 \]
 Combining either estimate with
 \eqref{eq:geometric-convolution-domination} proves
 \eqref{eq:truncated-sequence-bound}. The implicit constant depends only on
 \(p,A,B,j_0\).
\end{proof}

We now prove Theorem~\ref{thm:main-two-weight}.
\begin{proof}[Proof of Theorem~\ref{thm:main-two-weight}]
 Suppose first that
 \(\Ces_{\mu,\beta}:\AL_p^q(\omega_1)\to\AL_p^q(\omega_2)\)
 is bounded. Let \(\beta_0=\beta_0(\omega_1)\) be as in
 Lemma~\ref{lem:cited-high-kernel}, and choose an integer
 \(\gamma>\frac1q+\frac{\beta_0}{p}\).
 For \(0\le a<1\), apply Lemma~\ref{lem:cited-high-kernel} to
 \(f_a(z)=(1-az)^{-\gamma}\), \(z\in\Disk\). This gives
 \[
 \|f_a\|_{\AL_p^q(\omega_1)}
 \lesssim\what\omega_1(a)^{1/p}(1-a)^{1/q-\gamma}.
 \]
 Hence, Lemma~\ref{lem:cited-point-evaluation} and the boundedness of
 \(\Ces_{\mu,\beta}\) imply
 \begin{equation}\label{eq:general-necessity-upper}
 |(\Ces_{\mu,\beta}f_a)(a)|
 \lesssim \|\Ces_{\mu,\beta}\|
 \frac{\|f_a\|_{\AL_p^q(\omega_1)}}
 {\what\omega_2(a)^{1/p}(1-a)^{1/q}}
 \lesssim \|\Ces_{\mu,\beta}\|
 \left(\frac{\what\omega_1(a)}{\what\omega_2(a)}\right)^{1/p}
 (1-a)^{-\gamma}.
 \end{equation}
 For \(t\in[a,1)\), we have \(1-at\le2(1-a)\) and
 \(1-a^2t\le3(1-a)\). Since \(\mu\) is positive,
 \begin{equation}\label{eq:general-necessity-lower}
 (\Ces_{\mu,\beta}f_a)(a)
 \ge2^{-\beta}3^{-\gamma}\mu([a,1))(1-a)^{-\gamma-\beta}.
 \end{equation}
 Comparing \eqref{eq:general-necessity-upper} and
 \eqref{eq:general-necessity-lower}, we obtain
 \begin{equation}\label{eq:general-necessary-tail}
 \mu([a,1))\lesssim
 \|\Ces_{\mu,\beta}\|(1-a)^\beta
 \left(\frac{\what\omega_1(a)}{\what\omega_2(a)}\right)^{1/p},
 \qquad 0\le a<1.
 \end{equation}
 Taking \(a=0\) gives \(\mu([0,1))<\infty\), and hence
 \(\Ctail(\mu;\omega_1,\omega_2)\lesssim\|\Ces_{\mu,\beta}\|\).

 Conversely, assume that
 \(\Ctail(\mu;\omega_1,\omega_2)<\infty\). By
 Lemma~\ref{prop:continuous-discrete-tail},
 \[
  \sup_{n\ge n_0}E(r_n)
  \lesssim\Ctail(\mu;\omega_1,\omega_2).
 \]
 For each fixed
 \(\theta\), Lemma~\ref{prop:truncated-sequence-bound}, applied to
 the nonnegative sequence \(\bigl(R(f)(r_je^{i\theta})\bigr)_{j\ge0}\), gives
 \[
 \begin{aligned}
 \left\|
 \Tcut\left(\bigl(R(f)(r_je^{i\theta})\bigr)_{j\ge0}\right)
 \right\|_{\ell^p(w_2;n\ge n_0)}
 &\lesssim
 \Ctail(\mu;\omega_1,\omega_2)
 \left(\sum_{k\ge n_0}w_{1,k+1}R(f)(r_{k+1}e^{i\theta})^p\right)^{1/p}\\
 &\le
 \Ctail(\mu;\omega_1,\omega_2)
 \left(\int_{r_{n_0+1}}^1
 R(f)(re^{i\theta})^p\omega_1(r)r\,dr\right)^{1/p}.
 \end{aligned}
 \]
 Here the last inequality follows directly from the monotonicity of
 \(R(f)\) along each radius. Indeed, if \(r\in I_{k+1}\), then
 \(R(f)(r_{k+1}e^{i\theta})\le R(f)(re^{i\theta})\). Therefore,
 \[
 \sum_{k\ge n_0}w_{1,k+1}R(f)(r_{k+1}e^{i\theta})^p
 \le\int_{r_{n_0+1}}^1
 R(f)(re^{i\theta})^p\omega_1(r)r\,dr.
 \]
 Taking the
 \(L^q(\mathbb T)\)-quasinorm and using
 Lemma~\ref{lem:cited-maximal}, we obtain
 \[
 \begin{aligned}
 \left\|
 \Tcut\left(\bigl(R(f)(r_je^{i\theta})\bigr)_{j\ge0}\right)
 \right\|_{L^q(\ell^p(w_2;n\ge n_0))}
 &\lesssim
 \Ctail(\mu;\omega_1,\omega_2)\|R(f)\|_{L_p^q(\omega_1)}\\
 &\lesssim
 \Ctail(\mu;\omega_1,\omega_2)\|f\|_{\AL_p^q(\omega_1)}.
 \end{aligned}
 \]
 Together with Lemma~\ref{prop:finite-core}, this gives
 \[
 \|\Ces_{\mu,\beta}f\|_{\AL_p^q(\omega_2)}
 \lesssim
 \Ctail(\mu;\omega_1,\omega_2)
 \|f\|_{\AL_p^q(\omega_1)}.
 \]
 Thus,
 \(\Ces_{\mu,\beta}:\AL_p^q(\omega_1)\to\AL_p^q(\omega_2)\)
 is bounded, and the norm comparison follows, completing the proof.
\end{proof}

\section{Compactness and essential norm of the Ces\`aro-type operator}
\label{sec:essential-norm}

This section deals with the compactness and essential norm of
\(\Ces_{\mu,\beta}:\AL_p^q(\omega_1)\to\AL_p^q(\omega_2)\).
We first prove two lemmas.

\begin{lemma}\label{lem:normalized-evaluations}
 For \(0<p,q<\infty\) and \(\omega\in\Dhat\). Let \(1/2\le a<1\), define
 \(E_a:\AL_p^q(\omega)\to\mathbb C\) by
 \[
  E_a(f)=\what\omega(a)^{1/p}(1-a)^{1/q}f(a),
  \qquad f\in\AL_p^q(\omega).
 \]
 Then \(\sup_{1/2\le a<1}\|E_a\|<\infty\), and \(E_a\to0\) uniformly on
 every compact subset of \(\AL_p^q(\omega)\) as \(a\to1^-\).
\end{lemma}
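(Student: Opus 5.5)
The uniform bound $\sup_{1/2\le a<1}\|E_a\|<\infty$ is immediate from Lemma~\ref{lem:cited-point-evaluation}. For every $f\in\AL_p^q(\omega)$ and $a\in[1/2,1)$ it gives
\[
|E_a(f)|=\what\omega(a)^{1/p}(1-a)^{1/q}|f(a)|\le C\|f\|_{\AL_p^q(\omega)},
\]
where $C$ does not depend on $a$. So each $E_a$ is a bounded functional, and $\sup_a\|E_a\|\le C$.

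For uniform convergence on compact sets, I would use a standard $\varepsilon/3$ argument. The first step is pointwise convergence on a dense class, namely polynomials. If $P$ is a polynomial, then $|P(a)|\le \sup_{\overline{\Disk}}|P|$, while $\what\omega(a)\le\what\omega(0)<\infty$ and $(1-a)^{1/q}\to0$. Hence $E_a(P)\to0$ as $a\to1^-$.

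The second step is density of polynomials in $\AL_p^q(\omega)$. I would use dilations: for $f\in\AL_p^q(\omega)$ and $f_\rho(z)=f(\rho z)$, each $f_\rho$ is analytic on a neighbourhood of $\overline{\Disk}$, so its Taylor polynomials converge to it uniformly on $\overline{\Disk}$. Uniform convergence controls the $L_p^q(\omega)$ quasinorm, because $\omega$ is integrable. It remains to show $f_\rho\to f$ in $\AL_p^q(\omega)$. This follows from dominated convergence twice, first in $r$ and then in $\theta$:
\[
|f_\rho(re^{i\theta})-f(re^{i\theta})|\le 2R(f)(re^{i\theta}),
\]
pointwise convergence holds, and $R(f)\in L_p^q(\omega)$ by Lemma~\ref{lem:cited-maximal}. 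I expect this density step to be the only nontrivial part. Note that I avoid needing $\omega\in\Dhat$ here, since the radial maximal function supplies the domination directly.

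The third step combines these. Let $\mathcal K$ be compact and $\varepsilon>0$. Cover $\mathcal K$ by finitely many quasinorm balls of radius $\varepsilon$ centred at $f_1,\dots,f_N$, and choose polynomials $P_j$ with $\|f_j-P_j\|<\varepsilon$. Write $c$ for the quasi-triangle constant of $\AL_p^q(\omega)$. For $f\in\mathcal K$ with $\|f-f_j\|<\varepsilon$, linearity of $E_a$ gives
\[
|E_a f|\le |E_a(f-P_j)|+|E_aP_j|\le C c\,(2\varepsilon)+\max_j|E_aP_j|.
\]
The last term tends to $0$ as $a\to1^-$, so $\limsup_{a\to1^-}\sup_{f\in\mathcal K}|E_af|\lesssim\varepsilon$. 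Since $\varepsilon$ is arbitrary, this proves the uniform convergence.
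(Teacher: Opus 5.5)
Your proof is correct and follows essentially the same route as the paper: the uniform bound comes from Lemma~\ref{lem:cited-point-evaluation}, you show $E_a\to0$ on polynomials, use density of polynomials, and finish with a finite-net argument on the compact set. The only differences are that the paper cites density of polynomials in $\AL_p^q(\omega)$ from Aguilar-Hern\'andez et al., whereas you prove it yourself by dilation with the domination $|f_\rho-f|\le 2R(f)$ and Lemma~\ref{lem:cited-maximal} (valid for any radial weight), and that you fold density directly into the net argument instead of first proving $E_a(f)\to0$ for each fixed $f$.
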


\begin{proof}
 The uniform boundedness follows from
 Lemma~\ref{lem:cited-point-evaluation}. For a polynomial \(P\),
 \(|E_a(P)|\le \|P\|_{H^\infty}\what\omega(a)^{1/p}(1-a)^{1/q}\to0\).
 By \cite[Theorem~1.2(iii)]{AguilarHernandezEtAl2026}, analytic
 polynomials are dense in \(\AL_p^q(\omega)\), and hence the preceding
 estimate together with the uniform bound for \(E_a\) gives
 \(E_a(f)\to0\) for every \(f\in\AL_p^q(\omega)\). Let
 \(\mathcal K\subset\AL_p^q(\omega)\) be compact and put
 \(M=\sup_{1/2\le a<1}\|E_a\|\). Given \(\varepsilon>0\), choose
 \(f_1,\ldots,f_N\in\mathcal K\) such that for every \(f\in\mathcal K\) there
 is some \(j\) with
 \(\|f-f_j\|_{\AL_p^q(\omega)}<\varepsilon/[2(M+1)]\). Since \(E_a(f_j)\to0\) for each \(j\), there exists
 \(a_0\in(1/2,1)\) such that
 \[
 \max_{1\le j\le N}|E_a(f_j)|<\frac{\varepsilon}{2},
 \qquad a_0<a<1.
 \]
 Hence, for \(a_0<a<1\),
 \[
 \sup_{f\in\mathcal K}|E_a(f)|
 \le \max_{1\le j\le N}|E_a(f_j)|
 +\frac{M\varepsilon}{2(M+1)}<\varepsilon.
 \]
 Thus \(E_a\to0\) uniformly on \(\mathcal K\).
\end{proof}

\begin{lemma}\label{lem:compact-truncation}
 For \(0<p,q<\infty\), \(\beta>0\), \(\omega_1\in\Dhat\), and let
 \(\omega_2\) be a radial weight. Suppose that \(\mu\) be a finite positive
 Borel measure on \([0,1)\). If \(0<\tau<1\), put
 \(\mu_\tau=\mu|_{[0,\tau)}\). Then
 \(\Ces_{\mu_\tau,\beta}:\AL_p^q(\omega_1)\to\AL_p^q(\omega_2)\) is compact.
\end{lemma}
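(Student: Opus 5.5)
The plan is to exploit the fact that when \(\mu_\tau\) is supported in \([0,\tau)\), the operator \(\Ces_{\mu_\tau,\beta}\) only samples \(f\) on the closed disc \(\{|w|\le\tau\}\). On that disc, point evaluation and the kernel are uniformly controlled. I will verify the sequential criterion for compactness: if \((f_j)\) is bounded in \(\AL_p^q(\omega_1)\), then some subsequence of \(\Ces_{\mu_\tau,\beta}f_j\) converges in \(\AL_p^q(\omega_2)\). Since \(\AL_p^q\) is a complete quasi-Banach space and the operator is linear, this criterion is adequate.

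\textbf{Step 1: uniform convergence on \(\overline{D(0,\tau)}\).}
Suppose \(\|f_j\|_{\AL_p^q(\omega_1)}\le 1\). By Lemma~\ref{lem:cited-point-evaluation}, with \(\omega_1\in\Dhat\), the family \((f_j)\) is uniformly bounded on every compact subset of \(\Disk\). By Montel's theorem, a subsequence (not relabeled) converges uniformly on compact sets to some \(f\in H(\Disk)\).

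Fix \(\tau<\rho<1\). Then \(f_j\to f\) uniformly on \(\{|w|\le\rho\}\). Consequently, \(\varepsilon_j:=\sup_{|w|\le\tau}|f_j(w)-f(w)|\to0\).

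\textbf{Step 2: pointwise bound on the difference.}
Set \(g_j=f_j-f\). For \(z\in\Disk\) and \(t<\tau\) we have \(|tz|<\tau\) and \(|1-tz|\ge1-\tau\). Hence
\[
|\Ces_{\mu_\tau,\beta}g_j(z)|\le \frac{\mu([0,\tau))}{(1-\tau)^\beta}\,\varepsilon_j
\qquad\text{for all } z\in\Disk .
\]
Since \(\omega_2\) is integrable on \([0,1)\), the constant function \(1\) lies in \(L_p^q(\omega_2)\). Therefore
\[
\|\Ces_{\mu_\tau,\beta}g_j\|_{\AL_p^q(\omega_2)}\le \frac{\mu([0,1))}{(1-\tau)^\beta}\Bigl(\int_0^1\omega_2(r)r\,dr\Bigr)^{1/p}\varepsilon_j\to0 .
\]

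\textbf{Step 3: the limit lies in the target space.}
I need \(\Ces_{\mu_\tau,\beta}f\) to be analytic and in \(\AL_p^q(\omega_2)\). Analyticity follows by differentiating under the integral, since the integrand is uniformly bounded and analytic for \(t<\tau\). Membership follows from the same sup bound, because \(f\) is bounded on \(\{|w|\le\tau\}\).

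Step 2 then shows \(\Ces_{\mu_\tau,\beta}f_j\to \Ces_{\mu_\tau,\beta}f\) in \(\AL_p^q(\omega_2)\), with the quasi-triangle inequality handling the case \(p<1\) or \(q<1\). The same bound applied to \(f_j\) itself also shows that \(\Ces_{\mu_\tau,\beta}\) is bounded.

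\textbf{Main obstacle.}
The only delicate point is Step 1: we need uniform control of \(f_j\) on a disc of radius strictly greater than \(\tau\), given only an \(\AL_p^q\) bound. This is exactly what the point evaluation estimate of Lemma~\ref{lem:cited-point-evaluation} provides, since \(\what\omega_1(|a|)>0\) is bounded below on \(|a|\le\rho\).

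If one prefers to avoid extracting the limit \(f\), an alternative is available. Note that \(\varepsilon_j\) may be replaced by the Cauchy property \(\sup_{|w|\le\tau}|f_j-f_k|\to0\). This gives a Cauchy sequence in the complete space \(\AL_p^q(\omega_2)\), whose completeness is part of the basic theory in \cite{AguilarHernandezEtAl2026}.
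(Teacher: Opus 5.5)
Your proposal is correct and follows essentially the same route as the paper. Both arguments extract a locally uniformly convergent subsequence via the point evaluation estimate and Montel's theorem, and both use the uniform bound $|\Ces_{\mu_\tau,\beta}(f_j-f)(z)|\le \mu([0,\tau))(1-\tau)^{-\beta}\sup_{|w|\le\tau}|f_j(w)-f(w)|$ together with the trivial estimate of bounded functions in $L_p^q(\omega_2)$. The one minor difference is that the paper uses Fatou's lemma to place the limit $f$ in $\AL_p^q(\omega_1)$, whereas you check directly that $\Ces_{\mu_\tau,\beta}f\in\AL_p^q(\omega_2)$, which is all that is actually needed.
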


\begin{proof}
 Let \((f_j)\) be bounded in \(\AL_p^q(\omega_1)\). By
 Lemma~\ref{lem:cited-point-evaluation} and Montel's theorem, a subsequence,
 still denoted by \((f_j)\), converges uniformly on compact subsets of
 \(\Disk\) to an analytic function \(f\). Fatou's lemma gives
 \[
 \|f\|_{\AL_p^q(\omega_1)}^q
 \le \liminf_{j\to\infty}\|f_j\|_{\AL_p^q(\omega_1)}^q<\infty,
 \]
 so \(f\in\AL_p^q(\omega_1)\). By the definition of \(\mu_\tau\), for \(z\in\Disk\),
 \[
 \begin{aligned}
 |\Ces_{\mu_\tau,\beta}(f_j-f)(z)|
 &\le
 \int_{[0,\tau)}
 \frac{|f_j(tz)-f(tz)|}{|1-tz|^\beta}\,d\mu(t)\\
 &\le
 \frac{\mu([0,\tau))}{(1-\tau)^\beta}
 \sup_{|w|\le\tau}|f_j(w)-f(w)|.
 \end{aligned}
 \]
 Hence
 \[
 \begin{aligned}
 \|\Ces_{\mu_\tau,\beta}(f_j-f)\|_{\AL_p^q(\omega_2)}
 &\le
 \left(\int_0^1\omega_2(r)r\,dr\right)^{1/p}
 \sup_{z\in\Disk}|\Ces_{\mu_\tau,\beta}(f_j-f)(z)|\\
 &\le
 \left(\int_0^1\omega_2(r)r\,dr\right)^{1/p}
 \frac{\mu([0,\tau))}{(1-\tau)^\beta}
 \sup_{|w|\le \tau}|f_j(w)-f(w)|
 \longrightarrow0.
 \end{aligned}
 \]
 Hence \(\Ces_{\mu_\tau,\beta}\) is compact.
\end{proof}

We now prove Theorem~\ref{thm:intro-essential-norm}.

\begin{proof}[Proof of Theorem~\ref{thm:intro-essential-norm}]
 For \(0<\tau<1\), write \(\mu_\tau=\mu|_{[0,\tau)}\) and
 \(\nu_\tau=\mu|_{[\tau,1)}\). By Lemma~\ref{lem:compact-truncation},
 \(\Ces_{\mu_\tau,\beta}\) is compact, and hence
 \(\|\Ces_{\mu,\beta}\|_e\le\|\Ces_{\nu_\tau,\beta}\|
 \lesssim\Ctail(\nu_\tau;\omega_1,\omega_2)\).
 Fix \(1/2<\rho<1\) and take \(\tau>\rho\). For \(r\ge\rho\),
 \[
 \nu_\tau([r,1))
 =\mu([\max\{r,\tau\},1))
 \le \mu([r,1)).
 \]
 Hence
 \[
 \sup_{\rho\le r<1}
 \frac{\nu_\tau([r,1))}{(1-r)^\beta}
 \left(\frac{\what\omega_2(r)}{\what\omega_1(r)}\right)^{1/p}
 \le
 \sup_{\rho\le r<1}
 \frac{\mu([r,1))}{(1-r)^\beta}
 \left(\frac{\what\omega_2(r)}{\what\omega_1(r)}\right)^{1/p},
 \]
 whereas
 \[
 \sup_{1/2\le r<\rho}
 \frac{\nu_\tau([r,1))}{(1-r)^\beta}
 \left(\frac{\what\omega_2(r)}{\what\omega_1(r)}\right)^{1/p}
 \le \mu([\tau,1))
 \sup_{1/2\le r<\rho}\frac1{(1-r)^\beta}
 \left(\frac{\what\omega_2(r)}{\what\omega_1(r)}\right)^{1/p}
 \longrightarrow0.
 \]
 Also \(\nu_\tau([0,1))=\mu([\tau,1))\to0\). It follows that
 \[
 \limsup_{\tau\to1^-}\Ctail(\nu_\tau;\omega_1,\omega_2)
 \le \sup_{\rho\le r<1}
 \frac{\mu([r,1))}{(1-r)^\beta}
 \left(\frac{\what\omega_2(r)}{\what\omega_1(r)}\right)^{1/p}.
 \]
 Letting \(\rho\to1^-\) proves the upper estimate in
 \eqref{eq:intro-essential-norm}.

 For the reverse estimate, choose
 \(\gamma>1/q+\beta_0(\omega_1)/p\), where \(\beta_0(\omega_1)\) is as in
 Lemma~\ref{lem:cited-high-kernel}, and define
 \[
 g_a(z)=
 \frac{(1-a)^{\gamma-1/q}}
 {\what\omega_1(a)^{1/p}(1-az)^\gamma},
 \qquad \frac12\le a<1.
 \]
 Lemma~\ref{lem:cited-high-kernel} gives
 \(\sup_a\|g_a\|_{\AL_p^q(\omega_1)}<\infty\). Set
 \(E_a(h)=\what\omega_2(a)^{1/p}(1-a)^{1/q}h(a)\). If
 \(\mathcal K:\AL_p^q(\omega_1)\to\AL_p^q(\omega_2)\) is compact, then
 Lemma~\ref{lem:normalized-evaluations}, applied to the compact set
 \(\overline{\{\mathcal K g_a:1/2\le a<1\}}\), gives
 \(E_a(\mathcal K g_a)\to0\).
 For \(t\in[a,1)\), we have \(1-at\le2(1-a)\) and
 \(1-a^2t\le3(1-a)\). Positivity of \(\mu\) therefore gives
 \[
 E_a(\Ces_{\mu,\beta}g_a)
 \gtrsim
 \frac{\mu([a,1))}{(1-a)^\beta}
 \left(\frac{\what\omega_2(a)}{\what\omega_1(a)}\right)^{1/p}.
 \]
 Since \(E_a(\mathcal K g_a)\to0\), while
 \(\sup_a\|E_a\|<\infty\) and
 \(\sup_a\|g_a\|_{\AL_p^q(\omega_1)}<\infty\),
 \[
 \begin{aligned}
 \frac{\mu([a,1))}{(1-a)^\beta}
 \left(\frac{\what\omega_2(a)}{\what\omega_1(a)}\right)^{1/p}
 &\lesssim |E_a((\Ces_{\mu,\beta}-\mathcal K)g_a)|
      +|E_a(\mathcal K g_a)|\\
 &\le \|E_a\|\,\|\Ces_{\mu,\beta}-\mathcal K\|\,
      \|g_a\|_{\AL_p^q(\omega_1)}+|E_a(\mathcal K g_a)|.
 \end{aligned}
 \]
 Taking the upper limit as \(a\to1^-\) gives
 \[
 \limsup_{a\to1^-}\frac{\mu([a,1))}{(1-a)^\beta}
 \left(\frac{\what\omega_2(a)}{\what\omega_1(a)}\right)^{1/p}
 \lesssim \|\Ces_{\mu,\beta}-\mathcal K\|.
 \]
 Hence
 \[
 \limsup_{a\to1^-}\frac{\mu([a,1))}{(1-a)^\beta}
 \left(\frac{\what\omega_2(a)}{\what\omega_1(a)}\right)^{1/p}
 \lesssim\inf_{\mathcal K\ \mathrm{compact}}\|\Ces_{\mu,\beta}-\mathcal K\|
 =\|\Ces_{\mu,\beta}\|_e.
 \]
 This proves the reverse estimate in \eqref{eq:intro-essential-norm}.

 If \eqref{eq:intro-compactness-condition} holds, then
 \[
 \|\Ces_{\mu,\beta}-\Ces_{\mu_\tau,\beta}\|
 =\|\Ces_{\nu_\tau,\beta}\|
 \lesssim \Ctail(\nu_\tau;\omega_1,\omega_2)\longrightarrow0,
 \qquad \tau\to1^-.
 \]
 Since every \(\Ces_{\mu_\tau,\beta}\) is compact,
 \(\Ces_{\mu,\beta}\) is compact. Conversely, if
 \(\Ces_{\mu,\beta}\) is compact, then \(\|\Ces_{\mu,\beta}\|_e=0\), and
 \[
 0\le\limsup_{a\to1^-}\frac{\mu([a,1))}{(1-a)^\beta}
 \left(\frac{\what\omega_2(a)}{\what\omega_1(a)}\right)^{1/p}
 \lesssim\|\Ces_{\mu,\beta}\|_e=0.
 \]
 Thus \eqref{eq:intro-compactness-condition} follows.

\end{proof}

\section{Further remarks}\label{sec:examples-scope}

This section presents three applications of the main theorems.

\subsection{Standard weights}

We first discuss the standard weight case, where the conditions in Theorems~\ref{thm:main-two-weight} and~\ref{thm:intro-essential-norm} become simpler. This case further allows a direct comparison with the results obtained by Galanopoulos, Siskakis, Zhao
\cite{GalanopoulosSiskakisZhao2025} and Blasco, Mas
\cite{BlascoMas2026}. 

For \(\alpha>-1\), set \(\omega_\alpha(r)=(\alpha+1)(1-r^2)^\alpha\) and write
\(\AL_p^q(\alpha)=\AL_p^q(\omega_\alpha)\). When \(p=q\), Fubini's theorem yields
\[
 \AL_p^p(\alpha)=A_\alpha^p
 =H\!\left(p,p,\frac{\alpha+1}{p}\right).
\]
 As discussed in the introduction, Galanopoulos, Siskakis, and Zhao
\cite{GalanopoulosSiskakisZhao2025} obtained the boundedness result for
\(\Ces_{\mu,\beta}:A_\alpha^p\to A_\alpha^p\) with \(p\ge1\).  Blasco and Mas
\cite{BlascoMas2026} obtained the corresponding characterization for
\(p\ge1\) with \(s>0\) and for  \(0<p<1\), they additionally assumed
\(\alpha_2>\alpha_1\), equivalently \(\beta>s\).
The following corollary removes these restrictions:  it allows arbitrary \(\alpha_1,\alpha_2>-1\),
covers the full range \(0<p<\infty\), and also includes \(s\le0\). It also gives the
essential norm and compactness.  Note that $
\what\omega_\alpha(r)\asymp(1-r)^{\alpha+1}.
$ Thus, applying
Theorems~\ref{thm:main-two-weight} and~\ref{thm:intro-essential-norm}
to standard weights gives the following corollary.

\begin{corollary}\label{cor:standard-weights}
 For \(0<p,q<\infty\), \(\alpha_1,\alpha_2>-1\), and \(\beta>0\).   Put
 $
 s=\beta+\frac{\alpha_1-\alpha_2}{p}.
 $
 Let \(\mu\) be a positive Borel measure on \([0,1)\). Then
 \(\Ces_{\mu,\beta}:\AL_p^q(\alpha_1)\to\AL_p^q(\alpha_2)\) is bounded
 if and only if
 \[
 \begin{cases}
 \displaystyle
 \sup_{0\le r<1}\frac{\mu([r,1))}{(1-r)^s}<\infty,&s>0,\\[3mm]
 \mu([0,1))<\infty,&s\le0.
 \end{cases}
 \]
 Its norm is comparable to the corresponding quantity above. If the operator
 is bounded, then
 \[
 \bigl\|\Ces_{\mu,\beta}\bigr\|_e
 \asymp
 \limsup_{r\to1^-}\frac{\mu([r,1))}{(1-r)^s}.
 \]
 If \(s\le0\), the operator is compact. If \(s>0\), the operator is compact
 if and only if
 \[
 \lim_{r\to1^-}\frac{\mu([r,1))}{(1-r)^s}=0.
 \]
\end{corollary}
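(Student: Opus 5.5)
The plan is to specialize Theorems~\ref{thm:main-two-weight} and~\ref{thm:intro-essential-norm} to \(\omega_i=\omega_{\alpha_i}\in\Dclass\). We use \(\what\omega_{\alpha}(r)\asymp(1-r)^{\alpha+1}\) on \([1/2,1)\), with constants depending only on \(\alpha\). Then
\[
\frac{\mu([r,1))}{(1-r)^\beta}\left(\frac{\what\omega_{\alpha_2}(r)}{\what\omega_{\alpha_1}(r)}\right)^{1/p}\asymp \frac{\mu([r,1))}{(1-r)^{\beta}}(1-r)^{(\alpha_2-\alpha_1)/p}=\frac{\mu([r,1))}{(1-r)^{s}},\qquad 1/2\le r<1.
\]
Hence \(\Ctail(\mu;\omega_{\alpha_1},\omega_{\alpha_2})\asymp\mu([0,1))+\sup_{1/2\le r<1}\mu([r,1))(1-r)^{-s}\). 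The limsup in \eqref{eq:intro-essential-norm} is likewise comparable to \(\limsup_{r\to1^-}\mu([r,1))(1-r)^{-s}\).

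Next we simplify according to the sign of \(s\).

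\emph{Case \(s>0\).} On \([0,1/2)\) we have \(\mu([r,1))(1-r)^{-s}\le 2^s\mu([0,1))\). At \(r=0\) the quotient equals \(\mu([0,1))\). Therefore
\[
\mu([0,1))+\sup_{1/2\le r<1}\frac{\mu([r,1))}{(1-r)^s}\asymp\sup_{0\le r<1}\frac{\mu([r,1))}{(1-r)^s}.
\]
This gives the boundedness criterion and the norm equivalence. The essential norm and compactness statements then follow directly from Theorem~\ref{thm:intro-essential-norm}.

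\emph{Case \(s\le0\).} Since \((1-r)^{-s}\le1\), the supremum over \([1/2,1)\) is at most \(\mu([0,1))\). So \(\Ctail\asymp\mu([0,1))\), which gives boundedness if and only if \(\mu([0,1))<\infty\), with comparable norm.

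For compactness when \(s\le0\), we must show \(\limsup_{r\to1^-}\mu([r,1))(1-r)^{-s}=0\). If \(s=0\) this is \(\lim\mu([r,1))=0\), which holds by continuity from above because \(\mu\) is finite and \(\bigcap_r[r,1)=\emptyset\). If \(s<0\), the factor \((1-r)^{|s|}\) tends to \(0\) while \(\mu([r,1))\le\mu([0,1))\). In both subcases the essential norm is \(0\), matching the stated formula, and Theorem~\ref{thm:intro-essential-norm} yields compactness.

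The only point needing care is that the implicit constants in \(\what\omega_\alpha(r)\asymp(1-r)^{\alpha+1}\) are uniform on \([1/2,1)\). This is elementary: \(1-r^2\asymp 1-r\) there. Beyond that check, the proof is a routine specialization and presents no real obstacle.
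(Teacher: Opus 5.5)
Your proposal is correct and takes the same route as the paper. The paper only notes that $\what\omega_\alpha(r)\asymp(1-r)^{\alpha+1}$ and then specializes Theorems~\ref{thm:main-two-weight} and~\ref{thm:intro-essential-norm}. You additionally write out the details the paper leaves implicit: for $s>0$ you absorb $\mu([0,1))$ into $\sup_{0\le r<1}$; for $s\le0$ you reduce $\Ctail$ to $\mu([0,1))$, and you use continuity from above of the finite measure $\mu$ to get $\lim_{r\to1^-}\mu([r,1))(1-r)^{-s}=0$, hence compactness.
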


\subsection{Analytic weighted tent spaces}

We next recall the analytic weighted tent spaces. Let
\(dA(z)=dx\,dy/\pi\) be the normalized area measure on \(\Disk\). For
\(0<p,q<\infty\), \(\zeta\in\Torus\), and a fixed aperture \(\gamma>1\), set
\[
 \Gamma_\gamma(\zeta)
 =\{z\in\Disk:|z-\zeta|<\gamma(1-|z|)\}.
\]
For a radial weight \(\omega\), the analytic weighted tent space
\(\AT_p^q(\omega)\) consists of the functions \(f\in H(\mathbb D)\) such that
\[
 \|f\|_{\AT_p^q(\omega)}
 =\left(\int_{\Torus}
 \left[\int_{\Gamma_\gamma(\zeta)}|f(z)|^p\omega(z)
 \frac{dA(z)}{1-|z|}\right]^{q/p}|d\zeta|\right)^{1/q}<\infty.
\]
Tent spaces were introduced by Coifman, Meyer, and Stein
\cite{CoifmanMeyerStein1985}. In the weighted analytic setting,
Aguilar-Hern\'andez et al. show that
\begin{equation}\label{eq:example-AL-AT}
 \AL_p^q(\omega)=\AT_p^q(\omega),\qquad \omega\in\Dhat,
\end{equation}
with equivalent quasinorms in \cite[Theorem~1.3]{AguilarHernandezEtAl2026}. By \eqref{eq:example-AL-AT},
Theorems~\ref{thm:main-two-weight} and~\ref{thm:intro-essential-norm}
give the following corollary.

\begin{corollary}\label{cor:tent-spaces}
For \(0<p,q<\infty\), \(\beta>0\), \(\omega_1,\omega_2\in\Dclass\), and let
\(\mu\) be a positive Borel measure on \([0,1)\). Then
$
 \Ces_{\mu,\beta}:\AT_p^q(\omega_1)\longrightarrow\AT_p^q(\omega_2)
$
is bounded if and only if
\[
 \mu([0,1))+\sup_{1/2\le r<1}
 \frac{\mu([r,1))}{(1-r)^\beta}
 \left(\frac{\widehat\omega_2(r)}{\widehat\omega_1(r)}\right)^{1/p}
 <\infty.
\]
Moreover,
\[
 \bigl\|\Ces_{\mu,\beta}\bigr\|_{\AT_p^q(\omega_1)\to\AT_p^q(\omega_2)}
 \asymp
 \mu([0,1))+\sup_{1/2\le r<1}
 \frac{\mu([r,1))}{(1-r)^\beta}
 \left(\frac{\widehat\omega_2(r)}{\widehat\omega_1(r)}\right)^{1/p}.
\]
If the operator is bounded, then
\[
 \bigl\|\Ces_{\mu,\beta}\bigr\|_e
 \asymp
 \limsup_{r\to1^-}
 \frac{\mu([r,1))}{(1-r)^\beta}
 \left(\frac{\widehat\omega_2(r)}{\widehat\omega_1(r)}\right)^{1/p},
\]
and it is compact if and only if
\[
 \lim_{r\to1^-}
 \frac{\mu([r,1))}{(1-r)^\beta}
 \left(\frac{\widehat\omega_2(r)}{\widehat\omega_1(r)}\right)^{1/p}=0.
\]
\end{corollary}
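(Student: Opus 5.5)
The plan is to deduce Corollary~\ref{cor:tent-spaces} directly from Theorems~\ref{thm:main-two-weight} and~\ref{thm:intro-essential-norm} by transporting everything through the identification \eqref{eq:example-AL-AT}. Since \(\omega_1,\omega_2\in\Dclass\subset\Dhat\), \cite[Theorem~1.3]{AguilarHernandezEtAl2026} gives \(\AL_p^q(\omega_i)=\AT_p^q(\omega_i)\) as sets of analytic functions. It also provides constants \(c_i,C_i>0\) with
\[
 c_i\|f\|_{\AL_p^q(\omega_i)}\le\|f\|_{\AT_p^q(\omega_i)}\le C_i\|f\|_{\AL_p^q(\omega_i)} .
\]
The operator \(\Ces_{\mu,\beta}\) is defined pointwise on \(H(\mathbb D)\), independently of which quasinorm we use. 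So the only work is to check that boundedness, the operator quasinorm (up to constants), compactness, and the essential norm (up to constants) are all stable under replacing quasinorms by equivalent ones.

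\textbf{Boundedness and norm.} For \(f\in\AT_p^q(\omega_1)\),
\[
 \|\Ces_{\mu,\beta}f\|_{\AT_p^q(\omega_2)}\le C_2\|\Ces_{\mu,\beta}f\|_{\AL_p^q(\omega_2)}\le C_2\|\Ces_{\mu,\beta}\|_{\AL\to\AL}\|f\|_{\AL_p^q(\omega_1)}\le C_2c_1^{-1}\|\Ces_{\mu,\beta}\|_{\AL\to\AL}\|f\|_{\AT_p^q(\omega_1)}.
\]
The symmetric argument bounds the \(\AL\)-norm by the \(\AT\)-norm. Hence one operator is bounded iff the other is, and the two norms are comparable. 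Theorem~\ref{thm:main-two-weight} then gives the boundedness criterion and the norm equivalence with \(\Ctail(\mu;\omega_1,\omega_2)\).

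\textbf{Compactness and essential norm.} Equivalent quasinorms induce the same topology and the same bounded sets. Therefore an operator \(\mathcal K\) is compact \(\AT_p^q(\omega_1)\to\AT_p^q(\omega_2)\) iff it is compact \(\AL_p^q(\omega_1)\to\AL_p^q(\omega_2)\), so the two classes of compact operators coincide. Applying the norm-comparison inequality above to \(\Ces_{\mu,\beta}-\mathcal K\) for each such \(\mathcal K\) and taking the infimum yields
\[
 \|\Ces_{\mu,\beta}\|_{e,\AT}\asymp\|\Ces_{\mu,\beta}\|_{e,\AL}.
\]
Theorem~\ref{thm:intro-essential-norm} then gives the essential norm formula. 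The compactness criterion follows in the same way, or directly because compactness is itself preserved.

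\textbf{Main obstacle.} The only point needing care is that the essential-norm comparison uses the same class of compact operators in both settings, which is exactly the topological invariance just noted. No step is genuinely hard.
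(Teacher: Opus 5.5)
Your proposal is correct and is essentially the paper's own argument: the paper deduces Corollary~\ref{cor:tent-spaces} directly from Theorems~\ref{thm:main-two-weight} and~\ref{thm:intro-essential-norm} via the identification \eqref{eq:example-AL-AT}, valid since $\Dclass\subset\Dhat$. You spell out the routine details that the paper leaves implicit: the operator-norm comparison has constants independent of the operator, and equivalent quasinorms leave the class of compact operators unchanged, which together give the essential-norm comparison.
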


\subsection{Logarithmic-type weights}

For \(i=1,2\), let
\[
 \omega_i(r)=(1-r)^{\alpha_i}
 \left(\log\frac e{1-r}\right)^{\gamma_i},
 \qquad \alpha_i>-1,\quad\gamma_i\in\mathbb R.
\]
 It is known that \(\omega_i\in\Dclass\) for each \(i=1,2\). See \cite{AguilarHernandezEtAl2026}. If \(\gamma_i\ne0\), then
\(\omega_i\) is not a standard weight. 

\begin{corollary}\label{cor:logarithmic-weights}
 For \(0<p,q<\infty\), \(\beta>0\), and let \(\mu\) be a positive Borel measure on
 \([0,1)\). Then
 \(\Ces_{\mu,\beta}:\AL_p^q(\omega_1)\to\AL_p^q(\omega_2)\) is bounded
 if and only if
 \[
 \mu([0,1))+\sup_{1/2\le r<1}
 \frac{\mu([r,1))}
 {(1-r)^{\beta+(\alpha_1-\alpha_2)/p}}
 \left(\log\frac e{1-r}\right)^{(\gamma_2-\gamma_1)/p}<\infty.
 \]
 If the operator is bounded, then
 \[
 \bigl\|\Ces_{\mu,\beta}\bigr\|_e
 \asymp
 \limsup_{r\to1^-}
 \frac{\mu([r,1))}
 {(1-r)^{\beta+(\alpha_1-\alpha_2)/p}}
 \left(\log\frac e{1-r}\right)^{(\gamma_2-\gamma_1)/p}.
 \]
 The operator is compact if and only if the limsup tends to zero as \(r\to1^-\).
\end{corollary}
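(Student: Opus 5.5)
The plan is to obtain Corollary~\ref{cor:logarithmic-weights} as a direct specialization of Theorems~\ref{thm:main-two-weight} and~\ref{thm:intro-essential-norm}. Since \(\omega_1,\omega_2\in\Dclass\) (as quoted from \cite{AguilarHernandezEtAl2026}), those theorems apply once we compute the tail functions \(\what\omega_i\) up to constants. I will show that
\[
 \what\omega_i(r)\asymp(1-r)^{\alpha_i+1}\left(\log\frac e{1-r}\right)^{\gamma_i},\qquad 0\le r<1 .
\]
Substituting this into \(E(r)\) gives
\[
 E(r)\asymp\frac{\mu([r,1))}{(1-r)^{\beta+(\alpha_1-\alpha_2)/p}}\left(\log\frac e{1-r}\right)^{(\gamma_2-\gamma_1)/p}.
\]
The comparability constants are uniform in \(r\), so the suprema over \([1/2,1)\) and the upper limits as \(r\to1^-\) change only by multiplicative constants. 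The boundedness characterization and norm equivalence then follow from Theorem~\ref{thm:main-two-weight}. The essential norm estimate follows from \eqref{eq:intro-essential-norm}, and the compactness criterion from \eqref{eq:intro-compactness-condition}.

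The only real work is the tail asymptotic. Substituting \(x=1-s\) gives
\[
 \what\omega_i(r)=\int_0^{1-r}x^{\alpha_i}\left(\log\frac ex\right)^{\gamma_i}dx .
\]
To bound this, set \(\delta=1-r\in(0,1]\) and split \([0,\delta]\) into the dyadic pieces \([2^{-k-1}\delta,2^{-k}\delta]\), \(k\ge0\). On the \(k\)-th piece, \(\log(e/x)\) lies between \(\log(e/\delta)\) and \(\log(e/\delta)+(k+1)\log2\). Since \(\log(e/\delta)\ge1\), this gives
\[
 \log(e/\delta)\le\log(e/x)\le C(k+1)\log(e/\delta).
\]
The \(k\)-th piece therefore contributes at most \(C(k+1)^{|\gamma_i|}(2^{-k}\delta)^{\alpha_i+1}\left(\log\frac e\delta\right)^{\gamma_i}\). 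Because \(\alpha_i+1>0\), the geometric factor \(2^{-k(\alpha_i+1)}\) beats the polynomial \((k+1)^{|\gamma_i|}\), and the series converges. This yields the upper bound. For the lower bound, keep only the \(k=0\) piece \([\delta/2,\delta]\), where \(\log(e/x)\asymp\log(e/\delta)\) with constants depending only on \(\gamma_i\).

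I expect no genuine obstacle. The one point needing care is that \(\gamma_i\) may be negative, so the monotonicity of \(\log(e/x)^{\gamma_i}\) reverses. The dyadic comparison above handles both signs at once.
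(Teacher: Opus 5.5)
Your proposal is correct and matches the paper's proof. Both reduce the corollary to the tail asymptotic $\widehat\omega_i(r)\asymp(1-r)^{\alpha_i+1}\bigl(\log\frac e{1-r}\bigr)^{\gamma_i}$ via a dyadic splitting of $[r,1)$ (your pieces $[2^{-k-1}\delta,2^{-k}\delta]$ are the paper's $I_j$ after $x=1-s$), keeping the first piece for the lower bound and summing $2^{-k(\alpha_i+1)}(k+1)^{|\gamma_i|}$ for the upper bound, then substituting into Theorems~\ref{thm:main-two-weight} and~\ref{thm:intro-essential-norm}.
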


\begin{proof}
 
 It is only to show that
 \[
 \widehat\omega_i(r)\asymp
 (1-r)^{\alpha_i+1}
 \left(\log\frac e{1-r}\right)^{\gamma_i}, \quad i=1,2.
 \]
 Indeed, split \([r,1)\) into
 \[
 I_j=
 \left[1-2^{-j}(1-r),\,1-2^{-j-1}(1-r)\right),
 \qquad j\ge0.
 \]
 Then, it is easy to see that
 \[
 \int_{I_j}\omega_i(t)\,dt
 \asymp
 (1-r)^{\alpha_i+1}2^{-j(\alpha_i+1)}
 \left(\log\frac e{1-r}+j\right)^{\gamma_i}.
 \]
 Thus, we have
 \[
 \widehat\omega_i(r)
 \ge \int_{I_0}\omega_i(t)\,dt
 \asymp
 (1-r)^{\alpha_i+1}
 \left(\log\frac e{1-r}\right)^{\gamma_i},
 \]
On the other hand,
 \[
 \begin{aligned}
 	\widehat\omega_i(r)
 	&\lesssim
 	(1-r)^{\alpha_i+1}
 	\sum_{j\ge0}2^{-j(\alpha_i+1)}
 	\left(\log\frac e{1-r}+j\right)^{\gamma_i} \\
 	&\lesssim
 	(1-r)^{\alpha_i+1}
 	\left(\log\frac e{1-r}\right)^{\gamma_i}
 	\sum_{j\ge0}2^{-j(\alpha_i+1)}(1+j)^{|\gamma_i|} \\
 	&\lesssim
 	(1-r)^{\alpha_i+1}
 	\left(\log\frac e{1-r}\right)^{\gamma_i},
 \end{aligned}
 \]
Consequently,
 \[
 \frac{1}{(1-r)^\beta}
 \left(\frac{\widehat\omega_2(r)}{\widehat\omega_1(r)}\right)^{1/p}
 \asymp
 \frac{1}{(1-r)^{\beta+(\alpha_1-\alpha_2)/p}}
 \left(\log\frac e{1-r}\right)^{(\gamma_2-\gamma_1)/p}.
 \]
 The conclusions follow from Theorems~\ref{thm:main-two-weight}
 and~\ref{thm:intro-essential-norm}.
\end{proof}

\noindent\textbf{Data availability}
No data were used for the research described in this article.

\end{document}